\newtheorem{defn}{Definition}[section]
\newtheorem{thm}{Theorem}[section]
\newtheorem{prop}{Proposition}[section]
\newtheorem{rem}{Remark}[section]
\DeclareMathOperator*{\dive}{div}
\newcommand{\R}{\mathbb{R}}
\def\eps{\varepsilon}
\def\XXint#1#2#3{{\setbox0=\hbox{$#1{#2#3}{\int}$}
\vcenter{\vspace{-1pt}\hbox{$#2#3$}}\kern-.5\wd0}}
\def\Xint#1{\mathchoice {\XXint\displaystyle\textstyle{#1}}{\XXint\textstyle\scriptstyle{#1}}{\XXint\scriptstyle\scriptscriptstyle{#1}}{\XXint\scriptscriptstyle\scriptscriptstyle{#1}}\!\int}
\def\intmed{\hbox{\ }\Xint{\hbox{\vrule height -0pt width 10pt depth 1pt}}}
\begin{document}
\author{S. Fagioli \and E. Radici}
\address{ Simone Fagioli, Emanuela Radici - DISIM - Department of Information Engineering, Computer Science and Mathematics, University of L'Aquila, Via Vetoio 1 (Coppito)
67100 L'Aquila (AQ) - Italy}
\email{simone.fagioli@univaq.it}
\email{emanuela.radici@univaq.it}
\title[Diffusion with particles]{Solutions to aggregation-diffusion equations with nonlinear mobility constructed via a deterministic particle approximation}
\date{}
\begin{abstract}
\noindent
We investigate the existence of weak type solutions for a class of aggregation-diffusion PDEs with nonlinear mobility obtained as large particle limit of a suitable nonlocal version of the follow-the-leader scheme, which is interpreted as the discrete Lagrangian approximation of the target continuity equation. We restrict the analysis to bounded, nonnegative initial data with bounded variation and away from vacuum, supported in a closed interval with zero-velocity boundary conditions. The main novelties of this work concern the presence of a nonlinear mobility term and the non strict monotonicity of the diffusion function. As a consequence, our result applies also to strongly degenerate diffusion equations. The results are complemented with some numerical simulations. 
\end{abstract}
\maketitle

\section{Introduction}
A variety of models in mathematical biology such as chemotaxis, animal swarming, pedestrian movements etc.  concern with aggregation and diffusion phenomena. A large number of works have focused on this type of description, see \cite{capasso,okubo,hughes,keller_segel,mogilner,OkLe} for a classical and incomplete list of references. One of the relevant features of biological models is the possibility to describe phenomena at two different scales: microscopic (individual based) and macroscopic (locally averaged quantities).

Given a certain biological population composed by $N$ agents located in the positions $x_1,...,x_N$ in $\R^d$, the aggregation-diffusion process acts as a drift $V$ on the agents. The microscopic approach focuses on evolution in time of the positions and in most of the applications this evolution is of first order type since inertial effects can be neglected. The resulting system of ordinary differential equations possesses as continuum (macroscopic) counterpart the continuity equation 
\begin{equation}\label{eq:cont_eq}
\partial_t \rho + \dive \left(\rho V\right)=0,
\end{equation}
where $\rho$ is the averaged density of particles and $V$ describes the macroscopic law for the velocity. One of the mathematical problems arising in the modeling is the rigorous justification of the micro-macro equivalence, namely the construction of \eqref{eq:cont_eq} via a \emph{many particle limit}.

In this work we study the convergence of a deterministic particles approximation towards weak solutions of the following one-dimensional aggregation-diffusion equation

\begin{equation}\label{eq:general}
\partial_t\rho = \partial_x \left(\mathcal{M}(\rho)\partial_x\left(a(\rho)+K\ast\rho\right)\right),
\end{equation}
posed in a closed interval $[0,\ell] \subset \R$ equipped with no-slip (zero velocity) boundary conditions, see \eqref{main} below for a precise statement. Diffusion processes are due to short-range repulsions between particles and are modeled by the function $a$, that is in general a nonlinear function of the density, while long-range attraction phenomena are modeled by the \emph{attractive} interaction kernel $K$. Typical examples of $K$ are the \emph{power laws} $K(x)=|x|^\alpha$, for $\alpha>1$, the $Gaussian$ kernels $K(x)=-C e^{-|x|^2/l}$ and the \emph{Morse} kernels $K(x)=-C e^{-|x|/l}$, see \cite{mogilner}.

We consider the nonlinear mobility term $\mathcal{M}(\rho)$ depending on the density only, and in the form
\[ \mathcal{M}(\rho)=\rho v(\rho).\]
This choice can be made more general, for instance considering mobilities that depend on the velocity gradient, see \cite{verb}. Assuming that the velocity map $v(\rho)$ is degenerate for a certain value $\rho_{max}$, the above expression of $\mathcal{M}(\rho)$ is expected to prevent the overcrowding effect. This assumption can be applied to extend classical chemotaxis models ( take as reference example the Keller-Segel model  \cite{keller_segel}) that may produce blow-up in finite time, see \cite{bel,blanchet,JL,TW} and their references, to a more $realistic$ behavior in which this phenomenon is prevented: agents stop aggregating after a certain maximal density $\rho_{max}$ is reached, see \cite{BerDiFDol,bruna_chapman,painter_hillen} and the references therein.

The particle approximation of linear and nonlinear diffusion equations dates back to the pioneering works by \cite{presutti,varadhan2,varadhan1} involving probabilistic methods. A deterministic approach was introduced in \cite{Budd,DegMul,MacCSoc,Russo1,Russo2}, mainly with numerical purpose, and gained a lot of attentions in recent years after the equation was formulated as gradient flow of a proper energy functional in the space of probability measures, see \cite{AGS,otto,S,V2}. If one considers the linear mobility $\mathcal{M}(\rho)=\rho$,  zero nonlocal interaction $K=0$ and the energy functional
\[
 \mathcal{F}\left[\rho\right]=\int_{\Omega}A(\rho)dx,  \mbox{ where }A(\rho)=\int_{0}^{\rho}a(\zeta)d\zeta,
\]
it is possible to rewrite, formally at this stage, \eqref{eq:general} as
\[
 \partial_t\rho = \partial_x \left(\rho\partial_x\left(\frac{\delta \mathcal{F}}{\delta\rho}\left[\rho\right]\right)\right).
\]
A key tool to prove this is the so-called \emph{JKO-scheme} \cite{JKO}, that consists in constructing recursively a time-approximation for the density via a variational interpretation of the Implicit Euler scheme.

This gradient flow formulation carries out a naturally induced Lagrangian description, the so called \emph{pseudo-inverse} formalism, see \cite{CT}. 
The first use of this approach dates back to a couple of papers by Gosse e Toscani \cite{GT1,GT2}, where the authors introduce a time-space discretization of the pseudo-inverse equation and study several numerical properties of this scheme. One of the reasons why the interest of the scientific community is growing more and more on this topic is, indeed, the feasibility to design powerful numerical scheme, see \cite{CG,CDMM,CHPW,CarMol,CPSW,JMO}.
Most of these works deal with the discretization of the JKO-scheme and convergence of the particle method via variational techniques.
In \cite{MatOsb} Matthes and Osberger show the convergence of the Lagrangian approximation to a weak solution of the diffusion equation posed in a bounded interval with zero velocity boundary conditions and density away from zero (no vacuum). The key ingredients in the proof are the min-max principle and the BV estimates on the discretized density.  The diffusion they consider falls in the classical set of assumptions for nonlinear (possibly degenerate) diffusion equations: defining 
\[
 P(\rho)=\int_0^\rho \zeta a'(\zeta)d\zeta,
\] 
with $P'(0)=0$, $P'(\rho)>0$ for $\rho>0$, and under suitable regularity assumptions,  the nonlinear diffusion equation, i.e. \eqref{eq:general} with $K=0$, can be rephrased in the \emph{Laplacian form} $\partial_t\rho = \partial_{xx}P(\rho)$. A reference example is the porous medium equation \cite{vazquez}, where $P(\rho)=\rho^m$ and $a(\rho)=m(m-1)^{-1}\rho^{m-1}$ for some $m>1$. 

In a later work \cite{MatSol}, Matthes and S\"ollner tackle the problem of particle approximation for aggregation-diffusion through the discretization of the JKO. In that paper they prove convergence of the approximating particle sequence to the weak solutions of the pseudo-inverse equation.

One of the novelties of our work is the presence of the nonlinear mobility $\mathcal{M}(\rho)$. This term complicates the problem because it prevents the construction of the approximating sequence via discretization of the JKO-scheme, as it is done instead in most of the papers quoted above. The reason is that, in this case, the Wasserstein distance is not explicit but it can only be expressed in the Benamou-Brenier formulation, see \cite{CCN}, even if the equation preserves a gradient flow structure, see \cite{DF}. However, in presence of the nonlinear mobility, it is easy to reformulate the diffusion equation in Laplacian form by calling
\begin{equation}\label{eq:diffusione}
 \phi(\rho)=\int_0^\rho \zeta v(\zeta) a'(\zeta)d\zeta.
\end{equation}
The main difference between $\phi$ and $P$ is that $\phi'$ vanishes together with $v$ for some values $\rho>0$, in contrast with the classical porous medium type equations where the degeneracy is allowed only in $\rho=0$. From the above considerations we can deduce that the natural assumptions on $\phi$ are 
\[
\mbox{Lipschitz regularity and non-decreasing monotonicity,}
\]
see hypothesis (Diff) below for a precise statement. Included in this class of diffusions are the classical porous medium equations (one-point degeneracy), the two-phase reservoir flow equations (two point degeneracy), the so-called \emph{strongly} degenerate diffusion equations where $\phi'(\rho)=0$ for $\rho\in\left[\rho_1,\rho_2\right]$, see \cite{BBK,KR}.

Since the lagrangian procedure illustrated above seems not be applicable in the presence of nonlinear mobility, we tackle the problem of particle approximation of \eqref{eq:general}, among all the possible strategies, by adapting the techniques used in \cite{DFR}. This approximation approach is based on a deterministic ODEs strategy, the so-called \emph{Follow-the-Leader} model, introduced in \cite{AKMR,Whitham}. In \cite{DFFR,DFR} the authors prove convergence in the large particle limit of the \emph{Follow-the-Leader} model to local nonlinear conservation laws having in mind applications to traffic models. This technique was also extended to other traffic/pedestrian models such as the Hughes model and Aw-Rascle-Zhang model, see  \cite{DFFR2,DFFRR,DFFRR2}. The discrete-to-continuum limit is largely studied in the contest of vehicular traffic by using different approaches, we mention here the derivation from kinetic models in \cite{DeDe}, the paper \cite{belbel} where the microscopic scale is modeled by methods of game theory, and the probabilistic approach in \cite{Fer,FerNej} and their references. 

The link between equation \eqref{eq:general} and scalar conservation laws should not be surprising since, in the intervals where $\phi'$ is zero (degenerate diffusion), \eqref{eq:general} reduces to a conservation law with nonlocal flux, see \cite{BBK}. In \cite{DFFRA} the \emph{Follow-the-Leader} particle method is extended to conservation laws with nonlocal flux proving convergence of the approximation to entropy solutions of the problem.

The precise statement of the Initial-boundary value Problem we study is the following:
\begin{equation}\label{main}
\left\lbrace \begin{array}{lll} 
\partial_t \rho = \partial_{xx} \phi(\rho) + \partial_x (\rho v(\rho)K' \ast \rho) & (t,x) \in (0,T)\times [0,\ell], \\
\rho(0,x) = \bar{\rho}(x)  &(t,x) \in \{0\}\times [0,\ell], \\
v(\rho)\left(\partial_x a(\rho) + K' \ast \rho \right)= 0  &(t,x) \in (0,T)\times \{0\} \cup (0,T)\times \{\ell\},
\end{array} \right. 
\end{equation}
where $a$ and $\phi$ are coupled by \eqref{eq:diffusione}. The initial datum $\bar{\rho}$ satisfies the following assumptions:
\begin{itemize}
\item[(In1)] $\bar{\rho} \in BV([0,\ell]; \mathbb{R}^+)$ with $\int_0^{\ell} \bar{\rho}(x) dx = \sigma$, for some $\sigma>0$,
\item[(In2)] there are some fixed parameters $m,M >0$ such that $m \leq \bar{\rho}(x) \leq M$ for every $x \in [0,\ell]$.
\end{itemize}
Clearly, these conditions are not in contradiction as soon as $m \leq \frac{\sigma}{\ell} \leq M$.  
The diffusive term $\phi$, the velocity $v$ and the interaction kernel $K$ are taken under the following assumptions  
\begin{itemize}
\item[(Diff)] $\phi: [0,\infty) \to \R$ is a nondecreasing Lipshcitz function, with $\phi(0)=0$;
\item[(Vel)] $v: [0,\infty) \to \R$ is monotone decreasing and piecewise $C^1$ function such that $v(z)=0$ for every $z \geq \frac{M}{\sigma}$.
\item[(Ker)] $K \in L^1_{loc}(\R)$ is a nonlocal attractive potential, radially symmetric: $K'(x)>0$ for every $x>0$, $K'(0)=0$ and $K'(x)<0$ for $x<0$. Moreover, we assume that $K'$ is a $L_1$-Lipschitz continuous at least on $[-2\ell,\,2\ell]$ and 
\begin{equation*}\label{boundsuK'}
\sup_{x \in [-2\ell,\,2\ell]} |K''(x)| < L_2\,,\quad \mbox{ and }\quad \sup_{x \in [-2\ell,\,2\ell]} |K'''(x)| < L_3\,,
\end{equation*}
for some positive constants $L_2, L_3$. Then we set $L:= \max\{L_1,\,L_2,L_3\}$.
\end{itemize}
The particle approximation we are dealing with can be formally derived in this way. Assume that $\rho$ has unit mass and consider the cumulative distribution function associated to $\rho$
\[
  F(t,x)=\int_{-\infty}^x \rho(t,y)dy,
\]
and its pseudo-inverse
\[
 X(t,z)=\inf\left\{F(t,x)\geq z\right\} \quad \mbox{for }z\in\left[0,1\right],
\]
see Section 2 below. A formal computation \cite{CT} shows that the pseudo-inverse function satisfies the following equation,
\begin{equation}\label{eq:psed}
 \partial_t X(t,z) = -\partial_z \left(\phi\left(\frac{1}{\partial_z X(t,z)}\right)\right)-v\left(\frac{1}{\partial_z X(t,z)}\right)\int_0^1K'\left(X(t,z)-X(t,\zeta)\right)d\zeta.
\end{equation}
Suppose now that we want to solve \eqref{eq:psed} numerically. A first attempt can be a finite difference approximation in space: consider $N\in\mathbb{N}$ and a uniform partition of the interval $\left[0,1\right]$ of size $\Delta z$ with nodes $\left\{z_i\right\}_{i=0}^{N}$. Let us denote $X_i(t)=X(t,z_i)$. Since the space step $\Delta z$ (again formally) can be considered as a mass variable the ratio
\[
 \rho_i(t)=\frac{\Delta z}{X_{i+1}(t)-X_{i}(t)},
\]
is a good candidate for approximating our density in the interval $\left[X_{i}(t),X_{i+1}(t)\right]$. Then equation \eqref{eq:psed} reduces to the following system of ODEs:
\begin{align*}
 \dot{X}_i(t) = &\frac{1}{\Delta z} \left(\phi\left(\rho_{i-i}(t)\right)-\phi\left(\rho_i(t)\right)\right)-v\left(\rho_i(t)\right)\Delta z\sum_{i=0}^{N}K'\left(X_i(t)-X_j(t,)\right).
\end{align*}

In this work we prove that a proper definition of discrete density in the spirit of the scheme formally sketched above converges to a solution of the Initial-boundary value problem \eqref{main} in a suitable weak sense under the assumptions (In1), (In2), (Diff), (Vel) and (Ker).

The paper is organized as follows. In the first part of Section 2 we define in details our particle scheme and the corresponding discrete density and we state our main result in Theorem \ref{maintheorem}, after that we recall the notion of Wasserstein distance together with some useful properties and we close the Section proving a discrete version of the min-max principle in Proposition \ref{st:MinMax}. Section 3 is entirely devoted to the proof of Theorem \ref{maintheorem}. Finally, in Section 4 we provide some numerical simulations in order to show some evidence of patterns formation that can be interesting from the modeling point of view.

\section{Particle approximation and statement of the main result}

In this section we define the particle approximation scheme formally sketched above, and we give the statement of the main Theorem \ref{maintheorem}, in terms of the density approximation defined in the discrete structure. Then we recall the definition and some basic properties of the one dimensional Wasserstein distance that will be useful to study the convergence in suitable topology of the sequence of the discrete densities and we conclude the section showing that the \emph{maximum-minimum principle} holds in our setting, thus preventing from blow-up behavior as well as vacuum zones.
Given an initial datum $\bar{\rho}: [0,\ell]\to \mathbb{R}$ satisfying (In1) and (In2), a final time $T>0$ and $N \in \mathbb{N}$, we atomize $\bar{\rho}$ in $N+1$ particles: set $x_0(0)=0$ and define recursively 
\[ x_i(0) = \sup \left\lbrace x\in \R : \int_{x_{i-1}(0)}^x \bar{\rho}(z)dz < \frac{\sigma}{N} \right\rbrace,\quad \forall\, i=1,\ldots, N-1.\]
It is easy to check that $x_N(0)=\ell$ and $x_{i+1}(0) - x_i(0) \geq \frac{\sigma}{MN}$ for every $i=0,\ldots, N-1$.
Taking the above construction as initial condition, we let the particles evolve for all $t \in [0,T]$ accordingly to the following ODEs system  
\begin{equation}\label{ODES}
\begin{cases}
\dot{x}_0(t) = 0, \\
\dot{x}_i (t) = \dot{x}^d_i(t) + \dot{x}^{nl}_i(t) \quad i=1,\ldots,N-1, \\
\dot{x}_N(t) = 0, 
\end{cases}
\end{equation}
where we have set for $i=1,\ldots,N-1$,
\begin{align}
 \dot{x}^d_i(t) &= N(\phi(R_{i-1}(t)) - \phi(R_i)(t)),\label{part_diff}\\
\dot{x}^{nl}_i(t)&= -\frac{v(R_i(t))}{N} \sum_{j>i}K'(x_i(t) - x_j(t)) - \frac{v(R_{i-1}(t))}{N}\sum_{j<i} K'(x_i(t) - x_j(t)).\label{part_nonl}
\end{align}
and 
\begin{equation*}
R_i(t):= \frac{\sigma}{N(x_{i+1}(t) - x_i(t))}, \qquad i=0,\ldots,N-1.
\end{equation*}
For future use we compute 
\begin{equation}\label{Ridot}
\dot{R}_i(t) = - N R_i^2(t)(\dot{x}_{i+1}(t) - \dot{x}_i(t)).
\end{equation}
According to our formal considerations in Section 1, the functions $R_i$ are good candidates to be considered as approximation for solution of \eqref{main} in the \emph{space} interval $(x_{i+1}(t), x_i(t))$. Hence, we define the $N$-discrete density as 
\begin{equation}\label{rhoN}
\rho^N(t,\,x):= \sum_{i=0}^{N-1}  R_i (t) \chi_{[x^N_i(t),\,x^N_{i+1}(t))}(x), \quad \mbox{ for $(t,x) \in [0,T] \times [0,\ell]$.}
\end{equation}  
Note that, by construction, $\rho^N$ has the same mass of $\bar{\rho}$.

In the following Definition we introduce the notion of weak solutions we are dealing with.

\begin{defn}[Weak Solution]\label{weaksolutiondfn}
A function $\rho \in L^\infty([0,T]\times[0,\ell])$ is a \emph{weak solution} of the Initial-boundary value Problem \eqref{main} if  $\rho(0,\cdot)= \bar{\rho}$ and $\rho$ satisfies
\[ \int_0^T \int_0^{\ell} [\rho \varphi_t + \phi(\rho)\varphi_{xx} - \rho v(\rho) K'\ast \rho\, \varphi_x] dxdt = 0, \]
for all $\varphi \in C^{\infty}_c([0,T]\times[0,\ell])$ such that $\varphi_x(\cdot,0) = \varphi_x(\cdot,\ell) = 0$.
\end{defn}

The main result of our work is stated in the following Theorem.

\begin{thm}\label{maintheorem}
Consider $\phi,\,v,\,K$ satisfying the conditions (Diff), (Vel) and (Ker) respectively and let $\bar{\rho}: [0,\ell] \to \R$ be as in (In1) and $(In2)$ and let $T>0$ be fixed. Then the discretized densities $\rho^N$ as defined in \eqref{rhoN} strongly converge up to a subsequence in $L^1([0,T]\times[0,\ell])$ to a limit $\rho \in L^{\infty} \cap BV ([0,T]\times[0,\ell])$ which solves the Initial-boundary value Problem \eqref{main} in the sense of Definition \ref{weaksolutiondfn}.
\end{thm}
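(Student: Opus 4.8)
The plan is to combine $N$-uniform a priori estimates on the particle system with a compactness argument and a consistency analysis of the discrete scheme. First I would establish that the ODE system \eqref{ODES} admits a unique global solution on $[0,T]$. The right-hand sides \eqref{part_diff}--\eqref{part_nonl} are locally Lipschitz as long as particles neither collide (which would blow up the $R_i$) nor spread to create vacuum (which would send $R_i\to 0$), so the discrete min--max principle of Proposition \ref{st:MinMax} is exactly what is needed: it supplies uniform bounds $0<m'\le R_i(t)\le M'$ for all $i$ and $t\in[0,T]$, independent of $N$. These bounds rule out collisions and vacuum, hence give global existence, and simultaneously furnish the uniform control $m'\le\rho^N\le M'$ on the discrete densities \eqref{rhoN}; here the cut-off in (Vel), $v(z)=0$ for $z\ge M/\sigma$, is what keeps the aggregation drift from violating the upper bound.

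Next I would prove two $N$-uniform estimates. \emph{(i) Spatial bounded variation:} that $\sup_{t\in[0,T]}\mathrm{TV}(\rho^N(t,\cdot);[0,\ell])=\sup_t\sum_i|R_i(t)-R_{i-1}(t)|\le C$. The route is to differentiate this quantity in time through \eqref{Ridot}, insert the diffusive and nonlocal velocities, and exploit the monotonicity of $\phi$ together with the upwind-type structure of the splitting in \eqref{part_nonl}, absorbing the interaction contributions --- controlled via (Ker) and the $L^\infty$ bound on $R_i$ --- by a Gr\"onwall inequality. \emph{(ii) Time equicontinuity:} the mass-weighted displacement rate $\sum_i\frac{\sigma}{N}|\dot x_i(t)|$ is bounded uniformly in $N$, the nonlocal contribution by (Ker) and the $L^\infty$ bound, and the diffusive contribution because $\sum_i\frac{\sigma}{N}|\dot x_i^d|=\sigma\sum_i|\phi(R_{i-1})-\phi(R_i)|\le\sigma\,\mathrm{Lip}(\phi)\,\mathrm{TV}(\rho^N)$ is then controlled by estimate (i). Hence $\rho^N$ is uniformly Lipschitz in time in the $1$-Wasserstein distance, $W_1(\rho^N(t,\cdot),\rho^N(s,\cdot))\le C|t-s|$. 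Combining the uniform spatial $BV$ bound with this time-continuity, a generalized Aubin--Lions/Arzel\`a--Ascoli argument yields a subsequence (not relabelled) converging strongly in $L^1([0,T]\times[0,\ell])$ to a limit $\rho\in L^\infty\cap BV([0,T]\times[0,\ell])$.

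It then remains to identify $\rho$ as a weak solution in the sense of Definition \ref{weaksolutiondfn}. I would fix $\varphi\in C_c^\infty([0,T]\times[0,\ell])$ with $\varphi_x(\cdot,0)=\varphi_x(\cdot,\ell)=0$ and test the scheme by computing $\frac{d}{dt}\big(\frac{\sigma}{N}\sum_i\varphi(t,x_i(t))\big)$ and inserting $\dot x_i=\dot x_i^d+\dot x_i^{nl}$, which produces a discrete analogue of the weak formulation. A double discrete summation by parts on the diffusive part \eqref{part_diff} turns $\dot x_i^d=N(\phi(R_{i-1})-\phi(R_i))$, after using the relation $R_i(x_{i+1}-x_i)=\sigma/N$, into a Riemann sum for $\int\phi(\rho)\varphi_{xx}$, while a single summation by parts on \eqref{part_nonl} produces the transport term $-\int\rho v(\rho)(K'\ast\rho)\varphi_x$; the boundary contributions vanish because $\dot x_0=\dot x_N=0$ and $\varphi_x=0$ at $0,\ell$. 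The consistency errors --- differences between Riemann sums and integrals, and between the discrete convolution $\frac{\sigma}{N}\sum_j K'(x_i-x_j)$ and $K'\ast\rho^N$ --- are estimated by the smoothness of $\varphi$, the uniform spatial $BV$ bound, and (Ker). Finally, strong $L^1$ convergence of $\rho^N$, the continuity of $\phi$ and $v$, and the resulting convergence $K'\ast\rho^N\to K'\ast\rho$ let me pass to the limit in every term; the initial condition $\rho(0,\cdot)=\bar\rho$ is preserved by the $W_1$ time-continuity and the atomization of $\bar\rho$.

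The main obstacle is the uniform spatial $BV$ estimate (i). Because the mobility $\rho v(\rho)$ is nonlinear and the diffusion $\phi$ is only nondecreasing (allowing $\phi'\equiv 0$ on whole intervals, i.e.\ strongly degenerate diffusion), one cannot divide by $\phi'$ or $v$ to linearize the evolution of $\sum_i|R_i-R_{i-1}|$. Keeping the diffusive differences dissipative while absorbing the nonlocal interaction terms --- and doing so uniformly across the degeneracy set, where \eqref{main} behaves like a nonlocal conservation law rather than a diffusion --- is the delicate point, and the precise choice of evaluation points ($R_i$ versus $R_{i-1}$) in \eqref{part_nonl} is what makes the sign bookkeeping close.
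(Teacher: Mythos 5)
Your proposal is correct and follows essentially the same route as the paper: global well-posedness and $L^\infty$ bounds from the discrete min--max principle, a Gr\"onwall-based uniform spatial $BV$ estimate, uniform Lipschitz continuity in time for the $1$-Wasserstein distance, strong $L^1$ compactness via the generalized Aubin--Lions lemma of Rossi--Savar\'e, and identification of the limit through a consistency analysis with discrete summation by parts and Taylor expansions of $\varphi$ and $K$. You also correctly single out the $BV$ estimate (with its sign bookkeeping across the degeneracy set of $\phi'$ and the upwind splitting in the nonlocal velocity) as the delicate point, which is exactly where the bulk of the paper's work lies.
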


Let us introduce now the main concepts about one the dimensional Wasserstein distances, see \cite{AGS,S,V2} for further details.
As already mentioned, we deal with probability densities with constant mass in time and we need to evaluate their distances at different times in the Wasserstein sense.

For a fixed mass $\sigma>0$, we consider the space
\begin{equation*}
  \mathfrak{M}_\sigma \doteq \bigl\{\mu \hbox{ Radon measure on $\R$ with compact support} \colon \mu\ge 0 \text{ and }\mu(\R)=\sigma \bigr\}.
\end{equation*}
Given $\mu\in \mathfrak{M}_\sigma$, we introduce the pseudo-inverse variable $X_\mu \in L^1([0,\sigma];\R)$ as
\begin{equation}\label{eq:pseudoinverse}
X_\mu(z) \doteq \inf \bigl\{ x \in \R \colon \mu((-\infty,x]) > z \bigr\}.
\end{equation}

For $\sigma=1$, the one-dimensional \emph{$1$-Wasserstein distance} between $\rho_1,\rho_2\in \mathfrak{M}_1$  see e.g.\ \cite{CT} can be defined as
\[
d_{W^1}(\rho_1,\rho_2) \doteq \|X_{\rho_1}-X_{\rho_2}\|_{L^1([0,1];\R)}.
\]
We introduce the \emph{scaled $1$-Wasserstein distance} between $\rho_1,\rho_2\in \mathfrak{M}_\sigma$ as
\begin{equation}\label{eq:wass_equiv0}
    d_{W^1,\sigma}(\rho_1,\rho_2)\doteq \|X_{\rho_1}-X_{\rho_2}\|_{L^1([0,\sigma];\R)} .
\end{equation}
A sequence $(\rho_n)_n$ in $\mathfrak{M}_{\sigma}$ converges to $\rho\in \mathfrak{M}_{\sigma}$ in $d_{W^1,\sigma}$ if and only if for any $\varphi\in C_0([0,\ell];\R)$ growing at most linearly at infinity
\[
\lim_{n\to+\infty}\int_0^\ell \varphi(x) \,d\rho_n(x) = \int_0^\ell \varphi(x) \,d\rho(x) .
\]

We now state a technical result which will serve in the sequel. 
\begin{thm}[Generalized Aubin-Lions Lemma,\cite{RS}]\label{aubinlions}
Let $\tau>0$ be fixed. Let $\eta^N$ be a sequence in $L^{\infty}((0,\,\tau); L^1(\R))$ such that 
$\eta^N(t,\,\cdot) \geq 0$ and $\| \eta^N(t,\,\cdot) \|_{L^1(\R)}=1$ for every $N\in\mathbb{N}$ and $t\in [0,\,\tau]$.
If the following conditions hold
\begin{enumerate}
\item[I)] $\sup_{N}  \int_0^{\tau} \left[\|\eta^N(t,\,\cdot)\|_{L^1(\R)}dt + TV\big[ \eta^N(t,\,\cdot)\big]+ \mathrm{meas}(\mathrm{supp}[\eta^N(t,\cdot)])\right]dt < \infty$,
\item[II)] there exists a constant $C>0$ independent from $N$ such that $d_{W^1}\big( \eta^N(t,\,\cdot), \eta^N(s,\,\cdot) \big) < C |t-s|$ for all $s,\,t \in (0,\,\tau)$,
\end{enumerate}
then $\eta^N$ is strongly relatively compact in $L^1([0,\,\tau]\times \R)$.
\end{thm}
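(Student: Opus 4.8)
The plan is to verify the Fréchet--Kolmogorov--Riesz criterion for relative compactness in $L^1([0,\tau]\times\R)$: a set bounded in $L^1$ is relatively compact provided it is tight and equicontinuous under spatial and temporal translations, all uniformly in $N$. The uniform $L^1$ bound is immediate, since $\|\eta^N\|_{L^1([0,\tau]\times\R)}=\int_0^\tau\|\eta^N(t,\cdot)\|_{L^1(\R)}\,dt=\tau$. Spatial equicontinuity is equally direct from hypothesis (I): for $h\in\R$ the elementary $BV$ estimate $\|\eta^N(t,\cdot+h)-\eta^N(t,\cdot)\|_{L^1(\R)}\le |h|\,TV[\eta^N(t,\cdot)]$, integrated in time, yields a bound $\le |h|\sup_N\int_0^\tau TV[\eta^N(t,\cdot)]\,dt$, which tends to $0$ as $h\to0$ uniformly in $N$.

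The heart of the matter is temporal equicontinuity, where hypotheses (I) and (II) must be made to cooperate. I would first establish a one-dimensional interpolation (Landau--Kolmogorov type) inequality bridging the weak $d_{W^1}$ topology and the strong $L^1$ topology through the $BV$ seminorm: for $\mu,\nu\in\mathfrak{M}_1$ with $BV$ densities,
\[
\|\mu-\nu\|_{L^1(\R)}\le C_1\,d_{W^1}(\mu,\nu)^{1/2}\bigl(TV[\mu]+TV[\nu]\bigr)^{1/2}.
\]
This follows by setting $g:=F_\mu-F_\nu$ for the difference of the cumulative distribution functions, so that $\|g\|_{L^1(\R)}=d_{W^1}(\mu,\nu)$, while $g'=\mu-\nu$ and $g''$ is a signed measure with $|g''|(\R)\le TV[\mu]+TV[\nu]$; the scalar interpolation $\|g'\|_{L^1}\le C_1\|g\|_{L^1}^{1/2}\,|g''|(\R)^{1/2}$ then gives the claim.

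Feeding (I) and (II) into this inequality produces the temporal estimate. For $0<s<t$ one gets, using (II),
\[
\|\eta^N(t,\cdot)-\eta^N(s,\cdot)\|_{L^1(\R)}\le C_1\bigl(C|t-s|\bigr)^{1/2}\bigl(TV[\eta^N(t,\cdot)]+TV[\eta^N(s,\cdot)]\bigr)^{1/2}.
\]
Integrating the shift $t\mapsto t+h$ over $[0,\tau-h]$ and applying the Cauchy--Schwarz inequality in $t$ together with the time-integrated $BV$ bound of (I) gives $\int_0^{\tau-h}\|\eta^N(t+h,\cdot)-\eta^N(t,\cdot)\|_{L^1(\R)}\,dt\le C'\,h^{1/2}$, uniformly in $N$ and vanishing as $h\to0$. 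The square root furnished by the interpolation is precisely what converts the merely time-integrable (as opposed to uniform-in-time) $BV$ bound of (I) into a usable quantity after Cauchy--Schwarz.

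Finally, for spatial tightness I would combine the integrated support bound of (I) -- which at a well-chosen reference time yields a configuration of finite-measure support -- with the Wasserstein Lipschitz bound (II), which confines the transport of mass over the finite horizon $[0,\tau]$, so that all $\eta^N(t,\cdot)$ concentrate, up to arbitrarily small mass, in a common compact set; when the densities are supported in a fixed bounded interval this is immediate. With boundedness, tightness and both equicontinuities in hand, Fréchet--Kolmogorov--Riesz delivers relative compactness in $L^1([0,\tau]\times\R)$. The main obstacle is exactly the temporal step: the natural control on the time increments lives only in the weak metric $d_{W^1}$, and the crux is the interpolation inequality that upgrades it to the strong $L^1$ norm at the cost of half a power of the $BV$ seminorm -- the balance that makes the generalized Aubin--Lions mechanism function.
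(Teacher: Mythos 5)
You should first note a structural point: the paper does not prove this theorem at all --- it is imported as a black box from Rossi--Savar\'e \cite{RS}, where it follows from an abstract compactness principle (a functional with relatively compact sublevel sets, here built from the $L^1$ norm, the total variation and the measure of the support, combined with integral equicontinuity with respect to a pseudo-distance, here $d_{W^1}$). Your proposal is therefore by construction a different route: a direct, self-contained verification of the Fr\'echet--Kolmogorov criterion. Its core is sound and genuinely nice: the identity $\|F_\mu-F_\nu\|_{L^1(\R)}=d_{W^1}(\mu,\nu)$ for equal-mass measures on the line, the $L^1$ Landau--Kolmogorov inequality $\|g'\|_{L^1}\le C\|g\|_{L^1}^{1/2}\,|g''|(\R)^{1/2}$ applied to $g=F_\mu-F_\nu$ (a mollification step is needed since $g''$ is only a signed measure, but this is routine), giving $\|\mu-\nu\|_{L^1}\le C\,d_{W^1}(\mu,\nu)^{1/2}\bigl(TV[\mu]+TV[\nu]\bigr)^{1/2}$, and then Cauchy--Schwarz in time to convert the merely time-integrated $BV$ bound of (I) into an $O(h^{1/2})$ temporal $L^1$ modulus. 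The spatial equicontinuity via $\|\eta(t,\cdot+h)-\eta(t,\cdot)\|_{L^1}\le|h|\,TV[\eta(t,\cdot)]$ and the trivial mass bound are also correct. What your route buys is an elementary, quantitative proof that exposes exactly where hypotheses (I) and (II) interact; what the paper's citation buys is generality and the avoidance of the one step where your argument actually breaks.

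That step is tightness, and the gap there is genuine. Condition (I) bounds only the \emph{measure} of the supports, not their \emph{location}, so ``a well-chosen reference time with finite-measure support'' plus the Wasserstein bound (II) does not confine mass: the time-independent profiles $\eta^N(t,x)=\chi_{[N,\,N+1]}(x)$ satisfy all the hypotheses (unit mass, $TV=2$, $\mathrm{meas}(\mathrm{supp})=1$, and (II) holds trivially since nothing depends on $t$), yet admit no $L^1$-convergent subsequence on $[0,\tau]\times\R$. So the lemma as literally stated on all of $\R$ requires an additional anchoring hypothesis, and your sketch cannot be repaired without one. Two honest fixes: either assume (as holds in this paper's application, where all densities live in $[0,\ell]$) that the supports lie in a fixed compact set --- your final clause already concedes this --- or assume tightness of $\eta^N(t_0,\cdot)$ at a single time $t_0$ and propagate it via (II), using that for equal-mass measures $\nu(\{|x|>R+R'\})\le\mu(\{|x|>R\})+d_{W^1}(\mu,\nu)/R'$, since any coupling moving mass from $\{|x|\le R\}$ into $\{|x|>R+R'\}$ transports it a distance at least $R'$. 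With either fix, your Fr\'echet--Kolmogorov argument closes and yields the compactness actually used in the proof of Theorem \ref{maintheorem}; without one, the tightness paragraph asserts more than the hypotheses deliver.
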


We conclude this Section showing that the unique solution to \eqref{ODES} is well defined for every $t \in [0,T]$. It is enough to prove that the distances $x_{i+1}(t)-x_i(t)$ never degenerate. However, in our case we can prove something stronger, i.e. the discrete densities never exceed a uniform bound from above and below 
that depends only on $\bar{\rho}$ and $T$.

\begin{prop}[Discrete Min-Max Principle]\label{st:MinMax}
Let $T>0$ be fixed and $\bar{\rho}$ under the assumptions (In1) and (In2). Let $c$ be a positive consant satisfying $c > \frac{2m\,v_{max}\,L\ell}{\sigma}$. Then 
\begin{equation}\label{minmax}
\frac{\sigma}{MN} \leq x_{i+1}(t) - x_i(t) \leq 2\frac{e^{cT}\sigma}{mN},
\end{equation} 
for every $i=0,\ldots,N-1$ and $t \in [0,T]$.
\end{prop}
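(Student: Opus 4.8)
The plan is to read \eqref{minmax} as a discrete maximum–minimum principle for the densities $R_i$ and to propagate it in time through the identity \eqref{Ridot}. Since the vector field in \eqref{ODES} is locally Lipschitz wherever the gaps $g_i(t):=x_{i+1}(t)-x_i(t)$ are strictly positive, Cauchy–Lipschitz gives a unique maximal solution; establishing \eqref{minmax} \emph{a priori} then prevents the gaps from degenerating and extends the solution to the whole of $[0,T]$. I would first rewrite the two target inequalities in terms of the densities: $g_i\geq \frac{\sigma}{MN}$ is equivalent to $R_i\leq M$, while $g_i\leq 2\frac{e^{cT}\sigma}{mN}$ is equivalent to $R_i\geq \frac{m}{2}e^{-cT}$. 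As noted after the atomization, $\int_{x_i(0)}^{x_{i+1}(0)}\bar\rho=\frac{\sigma}{N}$ together with (In2) yields $m\leq R_i(0)\leq M$, so it suffices to show that $\max_iR_i$ never exceeds $M$ and that $\min_iR_i$ decays no faster than $\frac{m}{2}e^{-ct}$.

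The first ingredient is the sign of the diffusive part. Inserting \eqref{part_diff} into \eqref{Ridot} gives, for $1\leq i\leq N-2$, a diffusive contribution to $\dot R_i$ equal to $-N^2R_i^2\big(2\phi(R_i)-\phi(R_{i-1})-\phi(R_{i+1})\big)$, the cases $i\in\{0,N-1\}$ being modified only through $\dot x_0=\dot x_N=0$. Because $\phi$ is nondecreasing by (Diff), at an index $k$ realizing $\max_iR_i$ one has $\phi(R_k)\geq\phi(R_{k\pm 1})$, so the bracket is nonnegative and the diffusive contribution to $\dot R_k$ is $\leq 0$; symmetrically it is $\geq 0$ at an index realizing $\min_iR_i$. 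This is precisely the discrete maximum principle associated with the (possibly degenerate) diffusion, and it is what prevents the maximal density from growing and the minimal density from dropping through the diffusive flux alone.

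The core of the proof, and the step I expect to be the main obstacle, is the control of the nonlocal contribution $-NR_i^2(\dot x^{nl}_{i+1}-\dot x^{nl}_i)$, whose prefactor $NR_i^2$ is dangerously large. The crucial observation is that the discrete nonlocal velocity in \eqref{part_nonl} is the sampling of a spatially Lipschitz field. Expanding $\dot x^{nl}_{i+1}-\dot x^{nl}_i$ and regrouping the forward and backward sums, I would split it into terms in which $K'$ is evaluated at arguments shifted by $g_i$ and terms carrying velocity increments $v(R_{i\pm1})-v(R_i)$. For the first type, the $L$-Lipschitz bound on $K'$ over $[-2\ell,2\ell]$ and $0\leq v\leq v_{max}$ give a bound of order $g_i$ once the interacting particles inside $[0,\ell]$ are counted by means of the lower gap bound $g_j\geq\frac{\sigma}{MN}$; this is where a constant of the size appearing in the statement is produced. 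Crucially, since $g_i=\frac{\sigma}{NR_i}$, a bound of order $g_i$ turns the whole nonlocal contribution to $\dot R_i$ into one of order $R_i$, so the factor $N$ is absorbed exactly because the velocity difference scales like the gap.

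Putting the pieces together, at an index realizing $\min_iR_i$ the velocity-increment terms turn out to carry the favorable sign (since $v$ is decreasing and $K'$ has the sign prescribed in (Ker)), and the diffusive part only helps, so the remaining Lipschitz terms give $\frac{d}{dt}\min_iR_i\geq -c\,\min_iR_i$; Gr\"onwall then yields $\min_iR_i(t)\geq \frac{m}{2}e^{-ct}$, i.e. the upper bound in \eqref{minmax}, the spare factor $\frac{1}{2}$ being the slack I keep to close a continuity (bootstrap) argument, since the nonlocal estimate itself relies on the lower gap bound it is meant to prove. The lower bound in \eqref{minmax} I would obtain by the symmetric analysis at an index realizing $\max_iR_i$: there the diffusive contribution to $\dot R_i$ is $\leq 0$ and the velocity-increment terms carry the repulsive sign, which I would combine with the Lipschitz control of the residual nonlocal terms to keep $\max_iR_i\leq M$, hence $g_i\geq\frac{\sigma}{MN}$. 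The only genuine care needed is the delicacy standard for discrete maximum principles — a flat plateau of equal consecutive $R_i$ and the endpoints $i\in\{0,N-1\}$ — which I would resolve by selecting the extreme index of the plateau and by reading off the sign of $\dot x_1$ and $\dot x_{N-1}$ from the adjacent diffusive flux.
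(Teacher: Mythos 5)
Your treatment of the upper bound in \eqref{minmax} (equivalently the lower density bound) is essentially the paper's argument rewritten in terms of $R_i$: a first-crossing/Gr\"onwall argument in which, at the extremal index, the diffusive increment and the velocity-increment terms have a favorable sign by monotonicity of $\phi$ and $v$, and the residual nonlocal terms are of order $L\,v_{max}\,(x_{i+1}-x_i)$ by the Lipschitz bound on $K'$, which after multiplication by $NR_i^2$ is absorbed into $c\,R_i$ exactly as you say. (Your worry about circularity is unfounded: counting the interacting particles only uses that there are $N$ of them in $[0,\ell]$, not the lower gap bound, so no bootstrap is needed; the paper runs the same contradiction argument on $e^{-ct}(x_{i+1}(t)-x_i(t))$ directly.)

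There is, however, a genuine gap in your argument for the \emph{lower} bound $x_{i+1}-x_i\geq \frac{\sigma}{MN}$, i.e.\ $\max_i R_i\leq M$. You propose to get it from the sign of the diffusive part plus ``Lipschitz control of the residual nonlocal terms,'' but the residual nonlocal terms at a density maximum do \emph{not} have a favorable sign: in particular the self-interaction contribution $-\tfrac{2v(R_i)}{N}K'(x_{i+1}-x_i)$ is strictly negative whenever $v(R_i)>0$ (since $K'>0$ on $(0,\infty)$), so it compresses the gap and pushes $R_i$ \emph{above} its current maximum. A Lipschitz bound on such terms can only yield a Gr\"onwall estimate $\max_i R_i\leq Me^{Ct}$, i.e.\ $x_{i+1}-x_i\geq \frac{\sigma}{MN}e^{-Ct}$, which is strictly weaker than the stated threshold. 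The paper's proof closes this by invoking the degeneracy in (Vel): at the first touching time $t_1$ one has $R_i(t_1)=M$, hence $v(R_i(t_1))=0$, which annihilates every nonlocal term attached to the gap $(x_i,x_{i+1})$; only then do the monotonicity of $\phi$ and the sign of $K'$ in the surviving terms give $\dot x_{i+1}(t_1)\geq 0\geq \dot x_i(t_1)$. The sharp constant $M$ is thus an honest consequence of the saturation of the mobility at the maximal density, not of a perturbative estimate, and your proof cannot be completed without this ingredient.
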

\begin{proof}
Notice that, since $m \leq \bar{\rho} \leq M$, then by definition $\frac{\sigma}{MN} < x_{i+1}(0) - x_i(0) < \frac{\sigma}{mN}$. We want to show that similar bounds are preserved during the evolution. Let us first focus on the lower one. Thanks to the regularity of $K',v$ and $\phi$ and since $x_{i+1}(0) - x_i(0) \geq \frac{\sigma}{MN}$ for $i=0,\ldots,N-1$, we deduce that $\dot{x}_i$ is continuous at least on a short time interval. Let now $t_1$ be the first instant for which there is an index $i$ such taht $x_{i+1}(t_1) - x_i(t_1) = \frac{\sigma}{MN}$ and assume, for sake of contradiction, that there exists $t_2 > t_1$ such that $x_{i+1}(t) - x_i(t) < \frac{\sigma}{MN}$ for $t \in (t_1,t_2]$.
Since $x_{j+1}(t_1) - x_j(t_1) \geq \frac{\sigma}{MN}$ for all $j \neq i$, we obtain that $R_i(t_1) \geq R_j(t_1)$ for every $j \neq i$ and, in particular, 
\[ R_i(t_1) \geq R_{i-1}(t_1)\, \mbox{ and }\, R_i(t_1) \geq R_{i+1}(t_1)\,.  \]
Now, from the monotonicity of $\phi$, the sign of $K'$ and (Vel), we deduce that $v(R_i(t_1))=0$ and
\begin{align*}
\dot{x}_{i+1}(t_1) &= N(\phi(R_i(t_1)) - \phi(R_{i+1}(t_1))) - \frac{v(R_{i+1}(t_1))}{N}\sum_{j>i+1} K'(x_{i+1}(t_1)-x_j(t_1)) \geq 0,\\
\dot{x}_i(t_1) &= N(\phi(R_{i-1}(t_1))- \phi(R_i(t_1))) - \frac{v(R_{i-1}(t_1))}{N} \sum_{j<i} K'(x_i(t_1) - x_j(t_1)) \leq 0.
\end{align*}
Then the regularity of $\dot{x}_i$ and $\dot{x}_{i+1}$ ensures that 
\[ x_{i+1}(t) - x_i(t) \geq x_{i+1}(t_1) - x_i(t_1) = \frac{\sigma}{MN}, \]
for all $t \in (t_1,t_1+ \varepsilon]$ for some $\varepsilon>0$ small enough and this contradicts the existence of $t_2$. A consequence of the lower bound of~\eqref{minmax} and of the zero velocity boundary condition  is that  all the particles stay inside the domain $[0,\ell]$ and maintain their initial order for all times. 

We focus now on the upper bound in~\eqref{minmax}. Let us call
\[ \tau_1:= \inf \{ s \in (0,T] :\,\exists\,i\,:\,x_{i+1}(s) - x_i(s) > e^{cs}\frac{\sigma}{mN} \}, \]
If $\tau_1=T$ then the right inequality of \eqref{minmax} follows trivially
because $e^{c\tau_1} = e^{cT}$. It remains to discuss the case when $\tau_1 < T$. For sake of contradiction, assume that there is $\tau_2 \in (\tau_1,T]$ such that  
\begin{equation}\label{assurdoprinmin}
x_{i+1}(t) - x_i(t) > e^{ct} \frac{\sigma}{mN} \quad \mbox{ for every $t \in (\tau_1,\tau_2]$.}
\end{equation}
The contradiction occurs as soon as we prove that 
\begin{equation}\label{ddtesp}
\frac{d}{dt}\left[ e^{-ct}(x_{i+1}(t) - x_i(t))\right]_{|_{t=\tau_1}} < 0\,.
\end{equation}
Indeed, being this function smooth, there exists some positive $\varepsilon \ll 1$ for which $\tau_1 + \varepsilon < \tau_2$ and $\frac{d}{dt}\left[ e^{-ct}(x_{i+1}(t) - x_i(t))\right] < 0$ for all $t \in (\tau_1,\tau_1+\varepsilon]$. Then, for such $t$,~\eqref{ddtesp} implies
\[ e^{-ct}(x_{i+1}(t) - x_i(t)) = \frac{\sigma}{mN} + \int_{\tau_1}^t \frac{d}{ds}\left[ e^{-cs}(x_{i+1}(s) - x_i(s))\right]ds \leq \frac{\sigma}{mN}\,,  \]
which clearly contradicts~\eqref{assurdoprinmin}. Therefore, the above estimate of~\eqref{minmax} follows if we show the validity of~\eqref{ddtesp}. At the time $\tau_1$ we know that 
\[ x_{i+1}(\tau_1) - x_i(\tau_1) = e^{c\tau_1}\frac{\sigma}{mN}\quad \mbox{ and }\quad x_{j+1}(\tau_1) - x_j(\tau_1) \leq e^{c\tau_1}\frac{\sigma}{mN}\quad \forall\,j\neq i,\]
thus, in particular,
\[ R_{i+1}(\tau_1) \geq R_i(\tau_1)\quad \mbox{ and }\quad R_{i-1}(\tau_1) \geq R_i(\tau_1)\,. \]
Then the assumptions on (Diff),(Vel) and (Ker) ensure that
\begin{align*}
\frac{d}{dt}&\left[ e^{-ct}(x_{i+1}(t) - x_i(t))\right]_{|_{t=\tau_1}} = e^{-c\tau_1}\left[(\dot{x}_{i+1}(\tau_1) -\dot{x}_i(\tau_1)) - c(x_{i+1}(\tau_1) - x_i(\tau_1))\right] \\
&= e^{-c\tau_1} N[(\phi(R_i(\tau_1)) - \phi(R_{i+1}(\tau_1)))-(\phi(R_{i-1}(\tau_1)) - \phi(R_i(\tau_1)))] - c\frac{\sigma}{mN} \\
&\, -e^{-c\tau_1}\left[\frac{v(R_{i+1}(\tau_1))-v(R_i(\tau_1))}{N} \sum_{j>i+1} K'(x_{i+1}-x_j) -\frac{v(R_{i-1}(\tau_1))-v(R_i(\tau_1))}{N} \sum_{j<i} K'(x_i-x_j)\right] \\
&\, -e^{-c\tau_1}\frac{v(R_i(\tau_1))}{N} \sum_{j \neq i,\,i+1} [K'(x_{i+1}-x_j) - K'(x_i-x_j)] -2e^{-c\tau_1}\frac{v(R_i(\tau_1))}{N} K'(x_{i+1}(\tau_1) -x_i(\tau_1)) \\
&\leq e^{-c\tau_1} \frac{2v_{max}L \ell}{N} - c\frac{\sigma}{mN} 
\leq \frac{e^{-c\tau_1}}{N}(2v_{max}L \ell - c\frac{\sigma}{m}) < 0\,, 
\end{align*}
where the last inequality holds because of our initial choice of $c$. 
\end{proof}

\begin{rem}
Proposition \ref{st:MinMax} easily implies a uniform bound in time on $\phi$. Indeed, 
thanks to~\eqref{minmax} we obtain 
\[
e^{-cT}m \leq R_i(t) \leq M \mbox{ for all $i=0,\ldots,N-1$, } 
\]
and from the regularity of $\phi$ required in (Diff) we deduce the existence of a constant $C= C(M,T) > 0$ such that
\begin{equation}\label{boundphi}
\sup_{t\in[0,T]} \phi(R_i(t)) < C, 
\end{equation}
for every $i=0,\ldots,N-1$.
\end{rem}

\section{Proof of the Main Result}

The proof of Theorem~\ref{maintheorem} relies on two main steps: the first is prove that $(\rho^N)$ is strongly converging to a limit $\rho$ in  $L^1([0,T] \times [0,\ell])$, the second is show that $\rho$ is a weak solution of the initial-boundary value problem~\eqref{main}. In this section we take care of both these steps. As we will show in Proposition~\ref{totalvariation}, the sequence $(\rho_N)$ satisfies good compactness estimates with respect to the space variables. On the other hand, 
we do not have good control on the time oscillations of the discrete densities, thus the $L^1$-compactness in the product space is not straightforward. Nevertheless, Theorem~\ref{aubinlions} ensures that a uniform continuity estimate in time of the 
$1$-Wasserstein distance is enough to pass to the limit. This will be the content of Proposition~\ref{continuitytime}.
Summarizing, the main result of the first part of the section is the following.

\begin{thm}\label{convergence}
Under the assumptions of Theorem~\ref{maintheorem}, there exists a probability density $\rho: [0,T] \times [0,\ell] \to [m,M]$ such that $\rho^N$ $L^1$-converges strongly to $\rho$ in the product topology.
\end{thm}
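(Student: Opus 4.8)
The plan is to obtain strong $L^1$ compactness of $(\rho^N)$ from the Generalized Aubin--Lions Lemma (Theorem~\ref{aubinlions}), applied to the unit-mass rescaling $\eta^N := \rho^N/\sigma$, so that $\eta^N(t,\cdot)\ge 0$ and $\|\eta^N(t,\cdot)\|_{L^1}=1$. The whole argument then amounts to verifying hypotheses (I) and (II) of that lemma, and the notable structural point is that both ultimately rest on a single spatial estimate: a bound on $TV[\rho^N(t,\cdot)]$ that is uniform in $N$ and in $t\in[0,T]$, which is the content of the forthcoming Proposition~\ref{totalvariation}.

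The easy parts of (I) I would dispose of immediately: since $\rho^N(t,\cdot)$ is supported in $[0,\ell]$ we have $\mathrm{meas}(\mathrm{supp}\,\eta^N(t,\cdot))\le\ell$, while $\|\eta^N(t,\cdot)\|_{L^1}=1$, so both integrate over $[0,\tau]$ to quantities bounded independently of $N$. The genuine ingredient is $\sup_N\int_0^\tau TV[\eta^N(t,\cdot)]\,dt<\infty$, for which I would invoke the uniform bound $\sup_{t\in[0,T]}TV[\rho^N(t,\cdot)]\le C$ of Proposition~\ref{totalvariation}. As $\rho^N(t,\cdot)=\sum_i R_i(t)\chi_{[x_i,x_{i+1})}$ is piecewise constant, its total variation equals $\sum_i|R_i(t)-R_{i-1}(t)|$ up to boundary jumps, and the proof of such a bound is a discrete, Oleinik-type argument: differentiating $\sum_i|R_i-R_{i-1}|$ in time, using $\dot R_i=-NR_i^2(\dot x_{i+1}-\dot x_i)$ from~\eqref{Ridot} together with the representation $\dot x_{i+1}-\dot x_i=N(2\phi(R_i)-\phi(R_{i-1})-\phi(R_{i+1}))+(\text{nonlocal})$ read off from~\eqref{part_diff}--\eqref{part_nonl}, one recognizes a monotone discrete diffusion whose sign structure renders the total variation essentially nonincreasing, perturbed only by a nonlocal term controlled through (Vel) and (Ker). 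I expect this discrete BV estimate to be the main obstacle of the entire section.

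For (II) I would pass to the pseudo-inverse representation. The pseudo-inverse of $\eta^N(t,\cdot)$ is the piecewise-linear function interpolating $x_i(t)$ at the node $i/N$, so that $d_{W^1}(\eta^N(t,\cdot),\eta^N(s,\cdot))=\|X_{\eta^N(t)}-X_{\eta^N(s)}\|_{L^1([0,1])}\le\frac{2}{N}\sum_i|x_i(t)-x_i(s)|\le\frac{2}{N}|t-s|\sup_r\sum_i|\dot x_i(r)|$. It then remains to bound $\frac1N\sum_i|\dot x_i|$ uniformly in $N$. The nonlocal contribution is immediate: by (Vel) and the Lipschitz bound $|K'(x)|\le L|x|\le 2L\ell$ on $[-2\ell,2\ell]$ furnished by (Ker), one has $|\dot x_i^{nl}|\le 2v_{max}L\ell$, whence $\frac1N\sum_i|\dot x_i^{nl}|\le 2v_{max}L\ell$. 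The diffusive contribution is where the structure pays off: $\frac1N\sum_i|\dot x_i^{d}|=\sum_i|\phi(R_{i-1})-\phi(R_i)|=TV[\phi(\rho^N(t,\cdot))]\le\mathrm{Lip}(\phi)\,TV[\rho^N(t,\cdot)]$, again controlled by Proposition~\ref{totalvariation}. Thus (II) holds with a constant independent of $N$, and the same spatial estimate that powers (I) also powers (II); this is the substance of Proposition~\ref{continuitytime}.

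Finally, Theorem~\ref{aubinlions} yields a subsequence of $\eta^N$, hence of $\rho^N$, converging strongly in $L^1([0,T]\times[0,\ell])$ to some limit $\rho$. The uniform pointwise bounds on the $R_i(t)$ provided by Proposition~\ref{st:MinMax} and the subsequent Remark pass to the $L^1$ (almost everywhere) limit, so that $\rho$ is valued in a fixed compact subinterval of $(0,\infty)$: it is bounded above by $M$ and, crucially, bounded away from vacuum uniformly in time. The uniform spatial $BV$ and $L^\infty$ controls are inherited in the same way, placing $\rho\in L^\infty\cap BV$. This completes the proof modulo Proposition~\ref{totalvariation}, which is the genuine analytic core of the compactness argument.
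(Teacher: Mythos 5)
Your proposal is correct and follows essentially the same route as the paper: verify hypotheses (I) and (II) of the generalized Aubin--Lions lemma via the uniform $BV$ bound of Proposition~\ref{totalvariation} and the Wasserstein time-Lipschitz estimate of Proposition~\ref{continuitytime} (proved through the pseudo-inverse, exactly as you sketch), then pass the $L^\infty$ bounds from the min-max principle to the limit. Your anticipated structure for the two supporting propositions (Gronwall-type sign argument for the discrete $BV$ estimate, and the splitting of $\frac1N\sum_i|\dot x_i|$ into a diffusive part controlled by $\mathrm{Lip}(\phi)\,TV[\rho^N]$ and a bounded nonlocal part) also matches the paper's.
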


In the second part of the section, instead, we take care of the second step. Indeed, we show that the limit $\rho$ obtained in Theorem~\ref{convergence} is solves Problem~\eqref{main} in the sense of Definition~\ref{weaksolutiondfn}. 

In the following two Propositions we prove a uniform bound on the total variations of the discrete densities $\rho^N$ and a uniform Lipschitz control 
of the $1$-Wasserstein distance with respect to the time variable. For simiplicity and w.l.g., we assume from now on that $\sigma=1$.

\begin{prop}\label{totalvariation}
Let $T>0$ and $\bar{\rho},\,v,\phi,\,K$ under the assumptions of Theorem~\ref{maintheorem}. Then for every $N \in \mathbb{N}$ one has 
\begin{equation}\label{TV}
TV[\rho^N(t,\,\cdot)] \leq TV[\bar{\rho}] C(K,T) \qquad \forall t \in [0,T]\,,
\end{equation}
where $C(K,T)$ is a positive constant depending only on the Lipschitz property of $K'$ and on the final evolution time $T$.
\end{prop}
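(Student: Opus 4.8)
The plan is to differentiate the total variation of the piecewise constant function $\rho^N(t,\cdot)$ in time and to close a Gronwall inequality. Since $\rho^N(t,\cdot)$ is given by \eqref{rhoN}, its total variation is the sum of the jumps across the nodes,
\[
TV[\rho^N(t,\cdot)]=\sum_{i=1}^{N-1}|R_i(t)-R_{i-1}(t)|,
\]
up to the two boundary jumps $R_0,R_{N-1}$, which are harmless thanks to the uniform bounds of Proposition~\ref{st:MinMax}. At $t=0$ the atomization replaces $\bar\rho$ by its cell averages $R_i(0)$, and averaging does not increase the variation, so $TV[\rho^N(0,\cdot)]\le TV[\bar\rho]$, which supplies the initial value for the Gronwall argument. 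Away from the discrete set of times at which two consecutive $R_i$ coincide (where one argues with the one-sided derivative, replacing $\operatorname{sgn}(0)$ by an element of $[-1,1]$), I differentiate and use \eqref{Ridot} together with the splitting $\dot R_i=\dot R_i^d+\dot R_i^{nl}$ induced by \eqref{part_diff}--\eqref{part_nonl}. Writing $\sigma_i:=\operatorname{sgn}(R_i-R_{i-1})$ and rearranging by Abel summation, I would bring the derivative into the form $\frac{d}{dt}TV[\rho^N]=\sum_k(\sigma_k-\sigma_{k+1})\dot R_k$.

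For the diffusive contribution I would show it is TV-diminishing, i.e. non-positive. A direct computation from \eqref{Ridot} and \eqref{part_diff} gives $\dot R_k^d=N^2R_k^2\bigl(\phi(R_{k+1})-2\phi(R_k)+\phi(R_{k-1})\bigr)$, which is $R_k^2$ times a discrete Laplacian of $\phi(R)$. The factor $\sigma_k-\sigma_{k+1}$ is nonzero only at a local extremum of the sequence $(R_i)_i$: at a local maximum it equals $2$, while the monotonicity of $\phi$ forces $\phi(R_{k\pm1})-\phi(R_k)\le0$, hence $\dot R_k^d\le0$; at a local minimum the two signs are reversed. In either case the product is $\le0$, so the diffusion never increases $TV[\rho^N]$. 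This is the discrete analogue of the fact that $\partial_{xx}\phi(\rho)$ with nondecreasing $\phi$ contracts the total variation, and it is where hypothesis (Diff) enters.

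The genuinely delicate term is the nonlocal one. Expanding $\dot x_{k+1}^{nl}-\dot x_k^{nl}$ from \eqref{part_nonl}, I separate two families of contributions. The first collects the differences of the kernel sums at frozen mobility; using that $K'$ is $L$-Lipschitz with the bounds on $K'',K'''$ in (Ker), together with the uniform bounds $e^{-cT}m\le R_i\le M$ of Proposition~\ref{st:MinMax} and the regularity of $v$, these terms are controlled after Abel summation by $C(K)\,TV[\rho^N]$ plus absorbable lower-order quantities. The second family carries the mobility increments $v(R_{k+1})-v(R_k)$ and $v(R_k)-v(R_{k-1})$; because the kernel partial sums $\sum_{j>k}K'(x_k-x_j)\le0$ and $\sum_{j<k}K'(x_k-x_j)\ge0$ are of size $O(N)$, these terms are individually large and, a priori, threaten the estimate. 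The key observation is that they come with the right sign: since $v$ is decreasing, $\sigma_{k+1}\bigl(v(R_{k+1})-v(R_k)\bigr)\le0$, and combined with the sign of the kernel sums and the factor $\sigma_k-\sigma_{k+1}$, each such term contributes \emph{non-positively} to $\frac{d}{dt}TV[\rho^N]$. Thus the dangerous $O(N)$ contributions are in fact favorable and may be dropped for an upper bound.

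Collecting the three estimates yields a differential inequality of the form $\frac{d}{dt}TV[\rho^N(t,\cdot)]\le C(K)\,TV[\rho^N(t,\cdot)]$ on $[0,T]$, and Gronwall's lemma together with $TV[\rho^N(0,\cdot)]\le TV[\bar\rho]$ gives \eqref{TV} with $C(K,T)=e^{C(K)T}$. I expect the main obstacle to be precisely the bookkeeping in the nonlocal step: matching each mobility increment with the correct kernel partial sum and extremum factor so that the monotonicity of $v$ (Vel) and the sign structure of the attractive kernel (Ker) can be invoked to make the large terms non-positive, while simultaneously checking that the remaining frozen-mobility terms are bounded by the variation itself rather than by a power of $N$.
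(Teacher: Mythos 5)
Your proposal follows essentially the same route as the paper's proof: the same Abel summation reducing $\frac{d}{dt}TV[\rho^N]$ to extremum factors times $\dot R_i$, the same sign argument showing that both the discrete Laplacian of $\phi$ and the mobility-increment terms $\bigl(v(R_{i\pm1})-v(R_i)\bigr)\sum K'$ contribute non-positively at local extrema, and the same identification of the frozen-mobility kernel differences as the term to be bounded by $C\bigl(1+TV[\rho^N]\bigr)$ before applying Gronwall. The one step you leave as a sketch -- controlling those frozen-mobility terms -- is precisely where the paper spends its technical effort (a second summation by parts and a Taylor expansion of $K'$ using the bounds on $K''$ and $K'''$), but your plan for it is the paper's argument.
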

\begin{proof}
It is easy to show that $$TV[\rho^N(0,\,\cdot)] \leq TV[\bar{\rho}].$$ 
The total variation of $\rho^N$ at time $t$ is exactly 
\begin{align*}
T&V[\rho^N(t,\,\cdot)] = R_0(t) + R_{N}(t) + \sum_{i=0}^{N-1} |R_{i+1}(t) - R_i(t)| \\
&=\sum_{i=1}^{N-1}R_i [sign(R_i - R_{i-1}) - sign(R_{i+1}-R_i)] - R_0 (sign(R_1-R_0) -1) + R_N( sign(R_N - R_{N-1}) + 1) \\
&=\big( 1+ sign(R_0 -R_1) \big)R_0 + \big( 1- sign(R_{N-1} -R_N) \big)R_N +  \sum_{i=1}^{N-1}R_i \mu(R_i),
\end{align*}
where we set for brevity $\mu(R_{0})=(1-sign(R_{1}(t) - R_{0}(t)))$, $\mu(R_{N})=(1+sign(R_{N}(t) - R_{N-1}(t))) $ and
\[   \mu(R_i(t)):=sign(R_i(t) - R_{i+1}(t)) - sign(R_{i-1}(t) - R_i(t))\,.\]
The idea is to obtain~\eqref{TV} with a Gronwall type argument, therefore we compute
\begin{align*}
 \frac{d}{dt} TV[\rho^N(t,\,\cdot)] &= \dot{R}_0(t) +\dot{R}_{N}(t) + \sum_{i=0}^{N-1} sign\big(R_{i+1}(t) - R_i(t)\big)\big( \dot{R}_{i+1}(t) - \dot{R}_i(t) \big) \\
&= \big( 1+ sign(R_0(t) -R_1(t)) \big)\dot{R}_0(t) + \big(1 - sign(R_{N-1} - R_N(t))\big) \dot{R}_N(t) \\
&\,+ \sum_{i=1}^{N-1} \big(sign(R_i(t) - R_{i+1}(t)) - sign(R_{i-1}(t) - R_i(t)) \big)\dot{R}_i(t)\,.
\end{align*}
The value of the coefficient $\mu(R_i(t))$ clearly depends on the positions of the consecutive particles, it is easy to see that for $i \in \{ 1,\,\ldots,\,N-1\}$
\begin{equation*}
\mu(R_i(t))= \left\lbrace\begin{array}{lll}
-2 \quad &\mbox{if $R_{i+1} > R_i$ and $R_{i-1}> R_i$},\\
2 \quad  &\mbox{if $R_{i+1} < R_i$ and $R_{i-1}< R_i$},\\
0 \quad  &\mbox{if $R_{i+1} > R_i > R_{i-1}$ or $R_{i-1}> R_i > R_{i-1}$,}
\end{array}
\right.
\end{equation*}
therefore
\begin{equation*}
1+sign(R_0-R_1)=\left\lbrace \begin{array}{ll}
0 \quad \mbox{if $R_1 < R_0$,}\\
2  \quad \mbox{if $R_1 > R_0$,}
\end{array} \right.
\qquad
1- sign(R_{N-1} -R_N)= \left\lbrace\begin{array}{ll}
0 \quad \mbox{if $R_{N-1} > R_N$,}\\
2  \quad \mbox{if $R_{N-1} < R_N$.}
\end{array}\right. 
\end{equation*}
Recalling the explicit formula of $\dot{R}_i$ computed before, we can rewrite 
\[  \sum_{i=1}^{N-1}\mu(R_i(t)) \dot{R}_i(t) = - \sum_{i=1}^{N-1}\mu(R_i(t))(R_i(t))^2 \emph{I}_i -   \sum_{i=1}^{N-1}\mu(R_i(t))R_i(t)\emph{II}_i - \sum_{i=1}^{N-1}\mu(R_i(t))(R_i(t))^2 \emph{III}_i\,,  \]
where 
\[ \emph{I}_i = -\big(v(R_{i+1}(t)) - v(R_i(t))\big)\sum_{j > i+1} K'(x_{i+1}(t) - x_j(t)) - \big(v(R_i(t)) - v(R_{i-1}(t))\big)\sum_{j < i} K'(x_i(t) - x_j(t))\,, \]
and
\[ \emph{II}_i = -R_i(t)v(R_i(t)) \sum_{j \neq i,\,i+1} \big( K'(x_{i+1}(t) - x_j(t)) - K'(x_i(t) - x_j(t)) \big) - 2R_i(t)v(R_i(t))K'(x_{i+1}(t)-x_i(t))\,, \]
and 
\[ \emph{III}_i = N^2[(\phi(R_i(t))- \phi(R_{i+1}(t)))-(\phi(R_{i-1}(t))-\phi(R_i(t)))].\]
Then the time derivative of the total variation can be reads as 
\[  \frac{d}{dt} TV[\rho^N(t,\,\cdot)] = \mu(R_0)\dot{R}_0(t) + \mu(R_N) \dot{R}_N(t) - \sum_{i=1}^{N-1} \mu(R_i(t))(R_i(t))^2 (\emph{I}_i + \emph{III}_i) -  \sum_{i=1}^{N-1} \mu(R_i(t))R_i(t) \emph{II}_i\,, \]
Then estimate~\eqref{TV} follows thanks to a Gronwall argument as soon as we show that
\begin{equation}\label{dTVlimitata}
 \frac{d}{dt} TV[\rho^N(t,\,\cdot)] \leq C_1 + C_2\,TV[\rho^N(t,\,\cdot)]\,,
\end{equation}
for some constants $C_{1/2}=C_{1/2}(K,L,\ell)$.
We will see that the first three terms will not cause problems since $\dot{R}_0,\,\dot{R}_N$ are bounded and $-\sum_{i=1}^{N-1}\mu(R_i(t))(R_i(t))^2(\emph{I}_i + \emph{III}_i)$ is always negative, while it will be less easy to show that the last term satisfies the desired Gronwall type estimate. \\
Let us start with $-\sum_{i=1}^{N-1}\mu(R_i(t))(R_i(t))^2(\emph{I}_i + \emph{III}_i)$. We can already observe that the only relevant contributions in the sum come from the particles $x_i$ for which $\mu(R_i(t))\neq 0$. However, if the index $i$ is such that $\mu(R_i(t))=-2$, then $R_{i+1}, R_{i-1} > R_i$ and the monotonicity of $v$ and $\phi$ imply 
\begin{align*} 
 & v(R_{i+1}(t)) - v(R_i(t))  <0,\quad v(R_i(t))-v(R_{i-1}(t)) >0, \\ 
 & \phi(R_i(t)) - \phi(R_{i+1}(t)) <0,\quad \phi(R_{i-1}(t)) - \phi(R_i(t)) >0 .
 \end{align*}
The sign of $K'$ ensures that $\emph{I}_i < 0$, thus, on the other hand, $-2(R_i (t))^2 \emph{I}_i  >0$.
An analogous argument implies that, if $i$ is such that $\mu(R_i(t))=2$, then $\emph{I}_i > 0$ and $2(R_i (t))^2 \emph{I}_i  >0$. These considerations lead immediately to 
\begin{equation}\label{stuck[I]}
-\sum_{i=1}^{N-1}\mu(R_i(t))(R_i(t))^2(\emph{I}_i + \emph{III}_i)< 0\,.
\end{equation}
Let us now focus on $-\sum_{i=1}^{N-1}\mu(R_i(t))R_i(t)\emph{II}_i$. In this case, we would like to obtain an upper bound in terms of $TV[\rho^N(t,\,\cdot)]$ and for this purpose we need to estimate $| II_i |$. We get
\begin{align}\label{stimsuII_i}
\notag |\emph{II}_i| &=   R_i(t)|v(R_i(t))| \left|-2K'(x_{i+1}(t) - x_i(t)) -\sum_{j \neq i,\,i+1} \big(K'(x_{i+1}(t) - x_j(t)) - K'(x_i(t) - x_j(t))  \big)\right| \\
&\leq R_i(t) LC \frac{N-2}{N} \frac{1}{R_i(t)} + 2L\frac{1}{N} \leq C\,,
\end{align}
for some constant $C>0$. 
We have
\begin{equation*}
-\sum_{i=1}^{N-1}\mu(R_i(t))R_i(t)\emph{II}_i = B.T. + \sum_{i=1}^{N-2} sign(R_{i-1}-R_i)(R_i-R_{i-1})II_i + \sum_{i=2}^{N-2} sign(R_{i-1}-R_i) R_i (II_i - II_{i-1}),
\end{equation*}
and thanks to~\eqref{stimsuII_i} it is easy to see that 
\begin{equation}\label{stuck[IIa]}
|B.T.| + \left| \sum_{i=1}^{N-2} sign(R_{i-1}-R_i)(R_i-R_{i-1})II_i\right| \leq C_1 + C_2\sum_{i=2}^{N-1}|R_i - R_{i-1}| \leq C_1 + C_2 TV[\rho^N(t)],
\end{equation}
then it remains to check the term involving $II_i - II_{i-1}$. 
It is easy to see that $II_i - II_{i-1}$ may be written as a sum of three terms $II_{A_i} + II_{B_i} + II_{C_i}$, where 
\begin{align*}
II_{A_i} &= (R_{i-1}v(R_{i-1})- R_iv(R_i))\left[2K'(x_{i+1}-x_i) + \sum_{j\neq i,i+1}(K'(x_{i+1}-x_j) - K'(x_i-x_j)) \right], \\
II_{B_i} &= 2R_{i-1}v(R_{i-1})(K'(x_i-x_{i-1})-K'(x_{i+1}-x_i)), \\
II_{C_i} &= R_{i-1}v(R_{i-1})\left[\sum_{j\neq i-1,i}(K'(x_i-x_j)-K'(x_{i-1}-x_j)) -\sum_{j\neq i+1,i}(K'(x_{i+1}-x_j)-K'(x_{i}-x_j))\right]. 
\end{align*}
We can notice immediately that 
\begin{equation}\label{Bi}
|II_{B_i}| \leq 2L\|v\|_{L^{\infty}} R_{i-1}|(x_{i-1}-x_i)- (x_{i+1}-x_i)| = \frac{2L}{N}\|v\|_{L^{\infty}} R_{i-1} \frac{|R_{i-1}-R_i|}{R_{i-1}R_i}, 
\end{equation}
while, recalling that the functions $f(z)=zv(z)$ and $K'$ are Lipschitz, 
\begin{equation}\label{Ai}
|II_{A_i}| \leq Lip[f] NL(x_{i+1}-x_i) |R_i - R_{i-1}|.
\end{equation}
On the other hand, 
\begin{equation*}
|II_{C_i}| \leq |\widetilde{BT_i}| + \|v\|_{L^{\infty}} R_{i-1}\sum_{j\neq i \pm 1,i}|2K'(x_i-x_j) - K'(x_{i+1}-x_j) - K'(x_{i-1}-x_j)|,
\end{equation*}
so, if we expand $K'(x_{i \pm 1}- x_j)$ at the first order w.r.t. $K'(x_i - x_j)$ and recall that $K''$ and $K'''$ are bounded in $[0,\ell]$, we get 
\begin{align}\label{Ci}
\notag R_i|II_{C_i}| &\leq |R_i \widetilde{BT_i}| + \|v\|_{L^{\infty}} R_iR_{i-1}\sum_{j\neq i \pm 1,i} |K''(x_i-x_j)||(x_i-x_{i-1})-(x_{i+1}-x_i)| \\
\notag &\quad + \frac{\|v\|_{L^{\infty}}}{2} R_i R_{i-1} \sum_{j\neq i \pm 1,i} \|K'''\|_{L^{\infty}([-\ell,\ell])} [(x_{i-1}-x_i)^2 + (x_{i+1}-x_i)^2] \\
&\leq \tilde{C} + \frac{\|v\|_{L^{\infty}}}{N} L|R_i-R_{i-1}| + \frac{\|v\|_{L^{\infty}}}{2} L (R_i(x_i-x_i-1) + R_{i-1}(x_{i+1}-x_i)).
\end{align}
Thanks to~\eqref{Bi},~\eqref{Ai} and~\eqref{Ci} and the fact that the support of $\rho^N$ is uniformly bounded in time for every $N$, we then obtain
\begin{align*}
\sum_{i=2}^{N-2} &|sign(R_{i-1}-R_i) R_i (II_i - II_{i-1})| \leq \sum_{i=2}^{N-2} R_i[|II_{A_i}|+|II_{B_i}|+|II_{C_i}|] \\
&\leq C +  CL (Lip[f] + \frac{2\|v\|_{L^{\infty}}}{N} + \|v\|_{L^{\infty}})\sum_{i=2}^{N-2}|R_i - R_{i-1}| + L\|v\|_{L^{\infty}}\sum_{i=0}^{N-1}|x_{i+1}-x_i| \\
&\leq C(1 + TV[\rho^N(t,\cdot)]),
\end{align*}
and, together with~\eqref{stuck[IIa]}, this implies
\begin{equation}\label{stuck[II]}
\left|  -\sum_{i=1}^{N-1}\mu(R_i(t))R_i(t)\emph{II}_i  \right| \leq C_1 + C_2\,TV[\rho^N(t,\,\cdot)].
\end{equation}
Consider then with $\dot{R}_0$ and $\dot{R}_N$. The argument is similar, then we deal only with $\mu(R_0)\dot{R}_0$. Since $(v(R_1) - v(R_0)) \leq 0$, we can compute
\begin{align*}
\mu(R_0)\dot{R}_0 &= \mu(R_0)R_0 [R_0v(R_1)\sum_{j>1}\big(K'(x_1-x_j) - K'(x_0-x_j)\big) +2R_0v(R_0)K'(x_1-x_0)] \\
&\quad + \mu(R_0)(R_0)^2(v(R_1) - v(R_0))\sum_{j>1} K'(x_0 - x_j) \\
&\leq  \mu(R_0)R_0 [R_0v(R_1)\sum_{j>1}\big(K'(x_1-x_j) - K'(x_0-x_j)\big) +2R_0v(R_0)K'(x_1-x_0)]\,;
\end{align*}
and also
\[ \left| R_0v(R_1)\sum_{j>1}\big(K'(x_1-x_j) - K'(x_0-x_j)\big) +2R_0v(R_0)K'(x_1-x_0)\right| \leq CL\frac{N-1}{N} + \frac{2LC}{N}\,.  \]
In particular, $\mu(R_0)\dot{R}_0 \leq (3CL) R_0$ and
\begin{equation}\label{primoeultimotermine}
\mu(R_0)\dot{R}_0 + \mu(R_N)\dot{R}_N \leq 3CL (R_0 + R_N) \leq (3CL) TV[\rho^N(t,\,\cdot)]\,.
\end{equation}
By putting together~\eqref{stuck[I]},~\eqref{stuck[II]} and~\eqref{primoeultimotermine} we get estimate~\eqref{dTVlimitata} and~\eqref{TV} follows as a consequence of Gronwall Lemma. 
\end{proof}

\begin{prop}\label{continuitytime}
Let $T>0$ and $\bar{\rho},\,v,\phi,\,K$ under the assumptions of Theorem~\ref{maintheorem}. Then there exists $C>0$ that does not depend on $N$ such that 
\begin{equation}\label{contime}
d_{W^1}\big( \rho^N(t,\,\cdot), \rho^N(s,\,\cdot) \big) < C |t-s| \quad \forall\, s,\,t \in (0,\,T),\, \forall\,N \in \mathbb{N}\,.
\end{equation}
\end{prop}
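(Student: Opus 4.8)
The plan is to express the $1$-Wasserstein distance through the pseudo-inverse variables and to bound their time derivative uniformly in $N$. First I would observe that, since $\rho^N(t,\cdot)$ has unit mass and equals $R_i(t)$ on $[x_i(t),x_{i+1}(t))$, its cumulative distribution attains the value $i/N$ at each node $x_i(t)$ and grows affinely in between. Hence the pseudo-inverse $X^N(t,\cdot)$ from \eqref{eq:pseudoinverse} is the piecewise-affine interpolation of the nodes $(i/N,\,x_i(t))$, namely $X^N(t,z)=x_i(t)+(x_{i+1}(t)-x_i(t))(Nz-i)$ for $z\in[i/N,(i+1)/N]$. By \eqref{eq:wass_equiv0} with $\sigma=1$,
\[
d_{W^1}\big(\rho^N(t,\cdot),\rho^N(s,\cdot)\big)=\int_0^1 |X^N(t,z)-X^N(s,z)|\,dz,
\]
so \eqref{contime} follows once I bound $\int_0^1 |\partial_\tau X^N(\tau,z)|\,dz$ by a constant independent of $N$ and $\tau$, then integrate in $\tau$ between $s$ and $t$.

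For the derivative, the particles are Lipschitz in time because the right-hand sides of \eqref{ODES} are bounded (using Proposition~\ref{st:MinMax} and the regularity of $\phi,v,K'$), so differentiating the interpolation formula gives, for $z\in(i/N,(i+1)/N)$, the convex combination $\partial_\tau X^N=(1-\theta)\dot x_i+\theta\dot x_{i+1}$ with $\theta=Nz-i\in[0,1]$. Thus $|\partial_\tau X^N|\le|\dot x_i|+|\dot x_{i+1}|$, and summing the $N$ mass cells (each of length $1/N$) yields
\[
\int_0^1 |\partial_\tau X^N(\tau,z)|\,dz \le \frac{2}{N}\sum_{i=0}^{N}|\dot x_i(\tau)|.
\]
The task reduces to an $N$-uniform bound on $\frac1N\sum_i|\dot x_i|$, obtained by splitting $\dot x_i=\dot x_i^d+\dot x_i^{nl}$ as in \eqref{part_diff}--\eqref{part_nonl}.

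The nonlocal part is harmless: using $|v|\le v_{max}$ and $|K'(x_i-x_j)|=|K'(x_i-x_j)-K'(0)|\le L|x_i-x_j|\le L\ell$ (all particles stay in $[0,\ell]$), the two sums in \eqref{part_nonl} have $N-i$ and $i$ terms respectively, so $|\dot x_i^{nl}|\le v_{max}L\ell$ and $\frac1N\sum_i|\dot x_i^{nl}|\le v_{max}L\ell$. The diffusive part is where the key cancellation occurs: the factor $N$ in \eqref{part_diff} is exactly compensated by the $1/N$ mass spacing, since
\[
\frac1N\sum_{i=1}^{N-1}|\dot x_i^d|=\sum_{i=1}^{N-1}|\phi(R_{i-1}(\tau))-\phi(R_i(\tau))| \le \mathrm{Lip}[\phi]\sum_{i=1}^{N-1}|R_{i-1}(\tau)-R_i(\tau)| \le \mathrm{Lip}[\phi]\,TV[\rho^N(\tau,\cdot)].
\]
Invoking the uniform total variation bound \eqref{TV} of Proposition~\ref{totalvariation} controls the last quantity by $\mathrm{Lip}[\phi]\,TV[\bar\rho]\,C(K,T)$, independently of $N$ and $\tau$. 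Collecting both estimates gives $\frac1N\sum_i|\dot x_i|\le C$ and hence \eqref{contime}.

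I expect the only nontrivial point to be recognizing that the diffusive velocities, although individually of order $N$, aggregate into the total variation of $\phi(\rho^N)$, which is tamed by the already-established BV estimate; the pseudo-inverse interpolation, the min-max bound, and the nonlocal estimate are then routine.
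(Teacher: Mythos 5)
Your proof is correct and follows essentially the same route as the paper: represent $d_{W^1}$ through the piecewise-affine pseudo-inverses, reduce the claim to an $N$-uniform bound on $\frac1N\sum_i|\dot x_i|$, and obtain that bound by absorbing the factor $N$ in the diffusive velocities into $\mathrm{Lip}[\phi]\,TV[\rho^N(\tau,\cdot)]$ via Proposition~\ref{totalvariation} and estimating the nonlocal velocities by $v_{max}L\ell$. If anything, your aggregate estimate $\sum_i|\dot x_i|\le CN$ is exactly what the final integration in $\tau$ requires, whereas the paper's displayed intermediate bound $\sum_i|\dot x_i|\le 2N^2C$ overstates what its own computation yields (and would not by itself close the argument), so your version is the cleaner reading of the same proof.
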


\begin{proof}
Assume without loss of generality that $0< s < t < T$. 
We want to investigate the continuity in time of the discrete density $\rho^N$ with respect to the $1$-Wasserstein distance. Despite this step is more involved in higher dimensions, in the one-dimensional case we can take advantage of the well known relation between the $1$-Wasserstein distance of two probability measures and the $L^1$ distance of their respective pseudo-inverse functions, see Section 2. The claim follows once we show that
\[  \|  X_{\rho^N(t,\cdot)} - X_{\rho^N(s,\,\cdot)} \|_{L^1([0,1])} < C|t-s|, \]
for all $s,\,t \in (0,T)$ independently on $N$, where $X_{\rho^N(t,\cdot)}$ is the pseudo-inverse of the discretize density.
By the definition of $\rho^N$ we can explicitely compute
\[ X_{\rho^N(t,\,\cdot)}(z) = \sum_{i=0}^{N-1} \left(x_i(t) + \left(z-i\frac{1}{N}\right) \frac{1}{R_i(t)}\right) \textbf{1}_{[i\frac{1}{N},\,(i+1)\frac{1}{N})}(z)\,. \]  
Let us observe that 
\[ |\dot{x}_i(t)| \leq  N Lip[\phi]|R_i(t) - R_{i-1}(t)| + \left|\frac{v(R_i(t))}{N} \sum_{j>i} K'(x_i - x_j) - \frac{v(R_{i-1})}{N}\sum_{j<i} K'(x_i - x_j) \right|, \]
and, thanks to~\eqref{TV}, we obtain
\begin{equation}\label{stimaxdot}
\notag\sum_{i=0}^N |\dot{x}_i(t)| \leq 2N^2 C(K,\phi,T,\bar{\rho}).
\end{equation}  
Having in mind that \eqref{stimaxdot} and
\[ \left| \frac{d}{d\tau} \frac{1}{R_i(\tau)}  \right| = N |\dot{x}_{i+1}(\tau) - \dot{x}_i(\tau)| \leq N |\dot{x}_{i+1}(\tau)| + N |\dot{x}_i(\tau)|\,, \]
we directly conclude 
\begin{align*}
d_{W^1}\big( \rho^N(t,\,\cdot), \rho^N(s,\,\cdot) \big) &= \| X_{\rho^N(t,\,\cdot)} - X_{\rho^N(s,\,\cdot)}  \|_{L^1([0,\,1])} \\
&\leq \sum_{i=0}^{N-1} \int_{i/N}^{(i+1)/N} \left| x_i(t) - x_i(s) + \left(z - \frac{i}{N} \right) \left(\frac{1}{R_i(t)} - \frac{1}{R_i(s)} \right)  \right| dz \\
&\leq \sum_{i=0}^{N-1} \frac{1}{N} |x_i(t) - x_i(s)| +  \sum_{i=0}^{N-1} \left|\frac{1}{R_i(t)} - \frac{1}{R_i(s)} \right|  \int_{i/N}^{(i+1)/N} \left(z - \frac{i}{N} \right)dz \\
&= \sum_{i=0}^{N-1} \frac{1}{N} |x_i(t) - x_i(s)|  + \sum_{i=0}^{N-1} \frac{1}{2N^2} \int_s^t \left| \frac{d}{d\tau} \frac{1}{R_i(\tau)}\right| d\tau \\
&\leq  \frac{3}{N}\sum_{i=0}^{N} \int_s^t \left|\dot{x}_i(\tau)\right| d\tau \leq C(K,\phi,T,\bar{\rho})|t-s|.
\end{align*}
\end{proof}

\begin{proof}[Proof of Theorem~\ref{convergence}]
By construction, the densities $\rho^N$ are always nonnegative, with constant mass $\| \rho^N(t,\cdot)\|_{L^1([0,\ell])}=1$ and they are bounded uniformly in $L^{\infty}([0,T]\times[0,\ell])$ thank to~\eqref{minmax}.
Moreover, estimates~\eqref{TV} and~\eqref{contime} ensure that conditions I and II in Theorem~\ref{aubinlions} hold and $\rho^N$ converges, up to a subsequence, to a strong $L^1$-limit $\rho$ in the product space $[0,T]\times[0,\ell]$. Finally, from~\eqref{minmax} we deduce that $m \leq \rho(t,x) \leq M$ everywhere in $[0,T]\times[0,\ell]$, then the proof is concluded.
\end{proof}

We are finally in position to prove Theorem \ref{maintheorem}. We deduce from Theorem~\ref{convergence} the existence of a strong $L^1$-limit of the approximations, then we only need to show that this limit is a weak solution of \eqref{main} in terms of Definition~\ref{weaksolutiondfn}.

\begin{proof}[Proof of Theorem~\ref{maintheorem}]
Thanks to Theorem~\ref{aubinlions} we know that, up to a subsequence, $\rho^N$ converges to $\rho$ in $L^1([0,\,T]\times [0,\ell])$. There would be nothing to prove if we knew that for every $N$ the discrete density $\rho^N$ is a weak solution of~\eqref{main}. Unfortunately, this is not true in general but we can prove that the mismatch is vanishing as $N \to \infty$. We claim that for every $\varphi \in C_c^\infty( [0,\,T] \times [0,\ell])$  such that $\varphi_x(\cdot,0) = \varphi_x(\cdot,\ell) = 0$ one has
\begin{equation}\label{weaksol}
\int_0^T \int_0^\ell \rho^N(t,\,x) \varphi_t (t,\,x) + \phi(\rho^N)\varphi_{xx} - \rho^N(t,\,x) v(\rho^N(t,\,x))K' \ast \rho^N(t,\,x) \varphi_x(t,\,x) dx dt \longrightarrow  0,
\end{equation}
as $N\to\infty$.
In order to simplify the notation, we omit the dependence on the variable $t$ whenever it is clear from the context. 
Substituting the definition of $\rho^N$, the l.h.s. of \eqref{weaksol} becomes 
\begin{align*}
&\sum_{i=0}^{N-1}\int_0^T \int_{x_i}^{x_{i+1}} R_i\varphi_t(x)dxdt + \int_0^T \sum_{i=0}^{N-1}\phi(R_i)(\varphi_x(x_{i+1})-\varphi_x(x_i))dt \\
&- \int_0^T \sum_{i=0}^{N-1}\int_{x_i}^{x_{i+1}} R_i v(R_i)\varphi_x(x)\left[ \sum_{j=0}^{N-1} R_j (K(x-x_{j+1})-K(x-x_j)) \right]dxdt.
\end{align*}   
Let us observe that, integrating by part twice and using~\eqref{Ridot}, the first term in the above expression can be rewritten as follows
\begin{align*}
 \int_0^T\sum_{i=0}^{N-1} \int_{x_i}^{x_{i+1}} R_i\varphi_t(x) dxdt =& 
 \int_0^T \sum_{i=0}^{N-1} R_i (\dot{x}_{i+1} - \dot{x}_i) \intmed_{x_i}^{x_{i+1}} \varphi(x)dxdt \\
 &- \int_0^T \sum_{i=0}^{N-1} R_i \left( \varphi(x_{i+1})\dot{x}_{i+1} - \varphi(x_i)\dot{x}_i \right)dt\,.
\end{align*}
Recalling the expressions for $\dot{x}^{d}_i$ and $\dot{x}^{nl}_i$ in \eqref{part_diff} and \eqref{part_nonl} respectively, the convergence in~\eqref{weaksol} follows as soon as we show that $\lim_{N \to \infty} I^{d}_N = \lim_{N \to \infty} II^{nl}_N = 0$, where we have defined
\begin{align*}
I^{d}_N:=\quad&\int_0^T \sum_{i=0}^{N-1} R_i (\dot{x}^{d}_{i+1} - \dot{x}^{d}_i) \intmed_{x_i}^{x_{i+1}} \varphi(x)dxdt - \int_0^T \sum_{i=0}^{N-1} R_i \left( \varphi(x_{i+1})\dot{x}^{d}_{i+1} - \varphi(x_i)\dot{x}^{d}_i \right)dt \\
&\,+ \int_0^T \sum_{i=0}^{N-1}\phi(R_i)(\varphi_x(x_{i+1})-\varphi_x(x_i))dt, 
\end{align*}
and
\begin{align*}
II^{nl}_N:=\quad&\int_0^T \sum_{i=0}^{N-1} R_i (\dot{x}^{nl}_{i+1} - \dot{x}^{nl}_i) \intmed_{x_i}^{x_{i+1}} \varphi(x)dxdt - \int_0^T \sum_{i=0}^{N-1} R_i \left( \varphi(x_{i+1})\dot{x}^{nl}_{i+1} - \varphi(x_i)\dot{x}^{nl}_i \right)dt \\
&\,- \int_0^T \sum_{i=0}^{N-1}\int_{x_i}^{x_{i+1}} R_i v(R_i)\varphi_x(x)\left[ \sum_{j=0}^{N-1} R_j (K(x-x_{j+1})-K(x-x_j)) \right]dxdt,
\end{align*}
We first focus on the limit of the diffusive term. If we expand $\varphi$ at the first order with respect to $x_{i+1}$ and $x_i$ respectively, for some $\alpha_i \in (x_i,x),\,\beta_i \in (x,x_{i+1})$ we get
\begin{align*}
I^d_N &= \int_0^T \sum_{i=0}^{N-1} R_i \left[\dot{x}^d_{i+1} \left(\intmed_{x_i}^{x_{i+1}} \varphi(x)dx - \varphi(x_{i+1})\right) - \dot{x}^d_i \left(\intmed_{x_i}^{x_{i+1}} \varphi(x)dx - \varphi(x_i)\right) \right] \\
&\quad + \int_0^T \sum_{i=0}^{N-1}\phi(R_i)(\varphi_x(x_{i+1})-\varphi_x(x_i))dt \\
&= \int_0^T \sum_{i=0}^{N-1} (\phi(R_i)-\phi(R_{i+1}))\left(-\frac{\varphi_x(x_{i+1})}{2} - \frac{\varphi_{xx}(\beta_i)}{6}(x_{i+1}-x_i)\right) \\
&\quad- \int_0^T \sum_{i=0}^{N-1}(\phi(R_{i-1})-\phi(R_i))\left(\frac{\varphi_x(x_i)}{2} - \frac{\varphi_{xx}(\alpha_i)}{6}(x_{i+1}-x_i)\right) \\
&\quad+ \int_0^T \sum_{i=0}^{N-1}\phi(R_i)(\varphi_x(x_{i+1})-\varphi_x(x_i))dt \\
&= B.T. + \int_0^T \sum_{i=1}^{N-1} (x_{i+1}-x_i)\left[(\phi(R_{i+1})-\phi(R_i))\frac{\varphi_{xx}(\beta_i)}{6} + (\phi(R_i)-\phi(R_{i-1}))\frac{\varphi_{xx}(\alpha_i)}{6}\right],
\end{align*}
where $B.T.$ involves the terms related to $x_0$ and $x_{N}$. Since $\varphi_x(t,x_0)=\varphi_x(t,x_N)=0$ for all $t$ because of the zero velocity condition and $\varphi$ has compact support in $[0,\ell]$, by using the upper bound of~\eqref{minmax} and~\eqref{boundphi}, $B.T.$ can be easily estimated as follows:
\begin{equation}\label{B.T.}
|B.T.| \leq \| \varphi_{xx}\|_{L^{\infty}}\frac{C(\phi,T)}{N}\,.
\end{equation}
While, thanks to,~\eqref{TV} and the upper bound of~\eqref{minmax}, we deduce
\begin{align}\label{resto}
\notag &\int_0^T \sum_{i=1}^{N-1} \left|(x_{i+1}-x_i)\left[(\phi(R_{i+1})-\phi(R_i))\frac{\varphi_{xx}(\beta_i)}{6} + (\phi(R_i)-\phi(R_{i-1}))\frac{\varphi_{xx}(\alpha_i)}{6}\right] \right|dt\\
&\leq \frac{4}{6} \|\varphi_{xx}\|_{L^{\infty}} \int_0^T\sum_{i=0}^{N-1} |x_{i+1} -x_i||\phi(R_{i+1})-\phi(R_i)|dt \leq \|\varphi_{xx}\|_{L^{\infty}}  \frac{C(T,\bar{\rho},\phi)}{N},
\end{align}
then~\eqref{B.T.} and~\eqref{resto} together imply that 
\begin{equation}\label{diff}
\lim_{N\to\infty} I^{d}_N  = 0.
\end{equation}
Let us focus now on the nonlocal term $II^{nl}_N $. For future use we compute 
\begin{equation}\label{differenzaderivatenl}
\int_0^T \sum_{i=0}^{N-1} \big(\dot{x}^{nl}_{i+1}(t) - \dot{x}^{nl}_i(t)\big) dt \leq 2Lip[v]LT\,TV[\bar{\rho}] + v_{max}L\ell\,. 
\end{equation}
Notice that, expaning $\varphi$ to the first order, one can find $\vartheta_i \in (x_i,x_{i+1})$ such that 
\begin{align*}
II^{nl}_N &+ \int_0^T \sum_{i=0}^{N-1}\int_{x_i}^{x_{i+1}} R_i v(R_i)\varphi_x(x)\left[ \sum_{j=0}^{N-1} R_j (K(x-x_{j+1})-K(x-x_j)) \right]dxdt = \\
&= \int_0^T \sum_{i=0}^{N-1} R_i (\dot{x}^{nl}_{i+1} - \dot{x}^{nl}_i) \left[ \intmed_{x_i}^{x_{i+1}} \varphi(x)dx - \varphi(x_{i+1}) \right] -  \int_0^T \sum_{i=0}^{N-1} R_i \dot{x}^{nl}_i \left(\varphi(x_{i+1})-\varphi(x_i)\right)dt\\
&= \int_0^T  \sum_{i=0}^{N-1} \left[ (\dot{x}^{nl}_{i+1} - \dot{x}^{nl}_i) \left(\frac{\varphi_x(x_{i+1})}{2N} +\varphi_{xx}(\vartheta_i)\frac{ (x_{i+1} - x_i)}{3N} \right) - R_i \dot{x}^{nl}_i \int_{x_i}^{x_{i+1}} \varphi_x (x)dx \right]dt\,.
\end{align*}
Then we can rewrite $II^{nl}_N$ as the sum of three terms 
\begin{align*}
&A^{nl}_N:= \int_0^T  \sum_{i=0}^{N-1} (\dot{x}^{nl}_{i+1} - \dot{x}^{nl}_i) \frac{\varphi_x(x_{i+1})}{2N}dt\,, \\
&B^{nl}_N:= \int_0^T \sum_{i=0}^{N-1} (\dot{x}^{nl}_{i+1} - \dot{x}^{nl}_i) \frac{\varphi_{xx}(\vartheta_i)(x_{i+1} - x_i)}{3N} dt\,, \\
&C^{nl}_N:= \int_0^T \sum_{i=0}^{N-1} R_i \int_{x_i}^{x_{i+1}} \varphi_x(x) [\dot{x}^{nl}_i + \frac{v(R_i)}{N} \sum_{j=0}^{N-1} R_j \big( K(x-x_j) - K(x-x_{j+1})\big)]dxdt\,,
\end{align*}
so the remaining part of proof consists in showing that $|A^{nl}_N|,\,|B^{nl}_N|,\,|C^{nl}_N|$ vanishes as $N \to\infty$. 
The first two terms are immediate, indeed
\[ |A^{nl}_N| \leq \frac{\| \varphi_x \|_{L^{\infty}}}{2N}  \int_0^T \sum_{i=0}^{N-1}|\dot{x}^{nl}_{i+1} - \dot{x}^{nl}_i| dt   \quad \mbox{ and } \quad |B^{nl}_N| \leq \frac{\ell\| \varphi_{xx} \|_{L^{\infty}} }{2N}  \int_0^T \sum_{i=0}^{N-1} |\dot{x}^{nl}_{i+1} - \dot{x}^{nl}_i| dt, \]
which, thanks to~\eqref{differenzaderivatenl} immediately imply
\begin{equation}\label{AB_N}
|A^{nl}_N| + |B^{nl}_N| \leq \frac{C(T,K,v,\bar{\rho})}{N}, 
\end{equation}
On the other hand, the term $C^{nl}_N$, is less straightforward. Expanding $K$ at the first order, for every $j=0,\ldots,N-1$ we can find some $\beta_j \in (x-x_{j+1},\,x-x_j)$ such that
\begin{align*}
C^{nl}_N = & \int_0^T \sum_{i=0}^{N-1}R_i \int_{x_i}^{x_{i+1}} \varphi_x(x) \frac{v(R_i)}{N}\left[K'(x-x_i) + \sum_{j \neq i} K'(x - x_j)  - \sum_{j \neq i} K'(x_i-x_j) \right] dxdt \\
& + \int_0^T  \sum_{i=1}^{N-1}R_i \int_{x_i}^{x_{i+1}} \varphi_x(x)  \left[ \frac{v(R_{i}) - v(R_{i-1})}{N} \sum_{j<i} K'(x_i - x_j) \right] dxdt \\
& + \int_0^N  \sum_{i=0}^{N-1}R_i \int_{x_i}^{x_{i+1}} \varphi_x(x) \frac{v(R_i)}{N^2} \sum_{j=0}^{N-1} K''(\beta_j)(x_{j+1}-x_j) dxdt\,.
\end{align*}
Then we want to show that each of the three terms has order $1/N$. For the first one we use the Lipschitz regularity of $K'$ and the uniform bound on $\varphi$ and $v$, indeed
\begin{align}\label{C_N1}
\notag&\int_0^T \sum_{i=0}^{N-1}R_i \int_{x_i}^{x_{i+1}} \varphi_x(t,\,x) \frac{v(R_i)}{N}\left[ K'(x-x_i) + \sum_{j \neq i} K'(x - x_j)  - \sum_{j \neq i} K'(x_i-x_j) \right] dxdt \\
&\leq L \|v\|_{L^{\infty}} \| \varphi_x \|_{L^{\infty}} \int_0^T \sum_{i=0}^{N-1}R_i \int_{x_i}^{x_{i+1}} v(R_i) \sum_{j>i} |x-x_i| dxdt \leq \| \varphi_x \|_{L^{\infty}} \frac{C(K,T,v,\ell)}{N}.
\end{align} 
Recalling~\eqref{TV}, the Lipschitz regularity of $v$, and the uniform bound of $K'$ we can estimate the second term as follows
\begin{align}\label{C_N2}
\notag \int_0^T  &\sum_{i=1}^{N-1}R_i \int_{x_i}^{x_{i+1}} \varphi_x(t,\,x)  \left[ \frac{v(R_{i}) - v(R_{i-1})}{N} \sum_{j<i} K'(x_i - x_j) \right] dxdt \\
&= L\,Lip[v] \| \varphi_x \|_{L^{\infty}} \frac{1}{N} \int_0^T \sum_{i=1}^{N-1} |R_i - R_{i-1}| dt  \leq  \| \varphi_x \|_{L^{\infty}([0,\,T]\times \mathbb{R})} \frac{C(K,T,v,\bar{\rho})}{N}\,.
\end{align}
We take care now of the term involving $K''$. In this case, we can even achieve an upper bound of order $N^{-2}$. Recalling that $L$ is so that $\|K''\|_{L^{\infty}([-\ell,\ell])} < L$, we get
\begin{align}\label{C_N3}
\notag\int_0^T  &\sum_{i=0}^{N-1}R_i \int_{x_i}^{x_{i+1}} \varphi_x(t,\,x) \frac{v(R_i)}{N} \sum_{j=0}^{N-1} K''(\beta_j)(x_{j+1}-x_j) dxdt \\
&\leq L\, \|v\|_{L^{\infty}} \|\varphi_x\|_{L^{\infty}} \frac{1}{N^2} \int_0^T \sum_{j=0}^{N-1}  (x_{j+1}-x_j) dxdt \leq \|\varphi_x\|_{L^{\infty}([0,\,T]\times \mathbb{R})} \frac{C(K,v,T,\ell)}{N^2}\,.
\end{align}
Summarizing, estimates~\eqref{C_N1},~\eqref{C_N2},~\eqref{C_N3} imply that 
\begin{equation*}
|C^{nl}_N| \leq \frac{C(\varphi,\,K,\,\ell,\,T,v)}{N},
\end{equation*} 
and, together with estimates~\eqref{AB_N} and~\eqref{diff}, that $\lim_{N\to\infty} II^{nl}_N = 0$ and the validity of~\eqref{weaksol}. 
\end{proof}

\section{Numerical simulations}
This last section of the paper is devoted to the numerical study of equation \eqref{main} in order to compare the different results that came from different possible choices of the diffusion function. More precisely, we implement the procedure introduced in Section 2, first solving numerically the ODE system in \eqref{ODES} and then taking the piecewise constant reconstruction \eqref{rhoN} as a numerical approximation for the corresponding PDE. Most of the examples below highlight the competition between diffusive phenomena and aggregation phenomena. Assumptions in (Ker) allow us to analyze only \emph{smooth enough} kernels and it is not too restrictive to consider in all the simulations $K$ as an attractive Gaussian potential
\begin{equation}\label{agg}
 K(x)=\mathcal{K}\left(1-e^{-|x|^2}\right), \quad \mathcal{K}>0.
\end{equation}
The velocity function $v$ in the mobility term will be taken in the form 
\[
 v(\rho)=(1-\rho)_+,
\]
that is a usual choice in the applications. We will show different behaviors in the following depending on the choice of the diffusion function $\phi$.
In \figurename~\ref{fig:confronto} we compare final configurations for the aggregation-diffusion problem in which the diffusion of porous medium type depends on a parameter $\epsilon$
\begin{equation}\label{qd}
 \phi_{PM}(\rho)=\frac{\epsilon}{2}\rho^2,
\end{equation}
while the aggregation term is the Gaussian \eqref{agg} with $\mathcal{K}=1$. We set final time $T=1$, $N=300$ particles and we consider as initial configuration a uniform distributed density $\bar{\rho}(x)=0.7$ for $x\in\left[0,1\right]$. As shown in \figurename~\ref{fig:confronto}, the final configurations for $\epsilon=1,\, 0.1,\, 0.05$, are (numerically) stable, i.e. diffusion and aggregation phenomena compensate each other.    

\begin{figure}[!ht]
\begin{center}
\begin{minipage}[c]{.7\textwidth}
\includegraphics[width=1\textwidth]{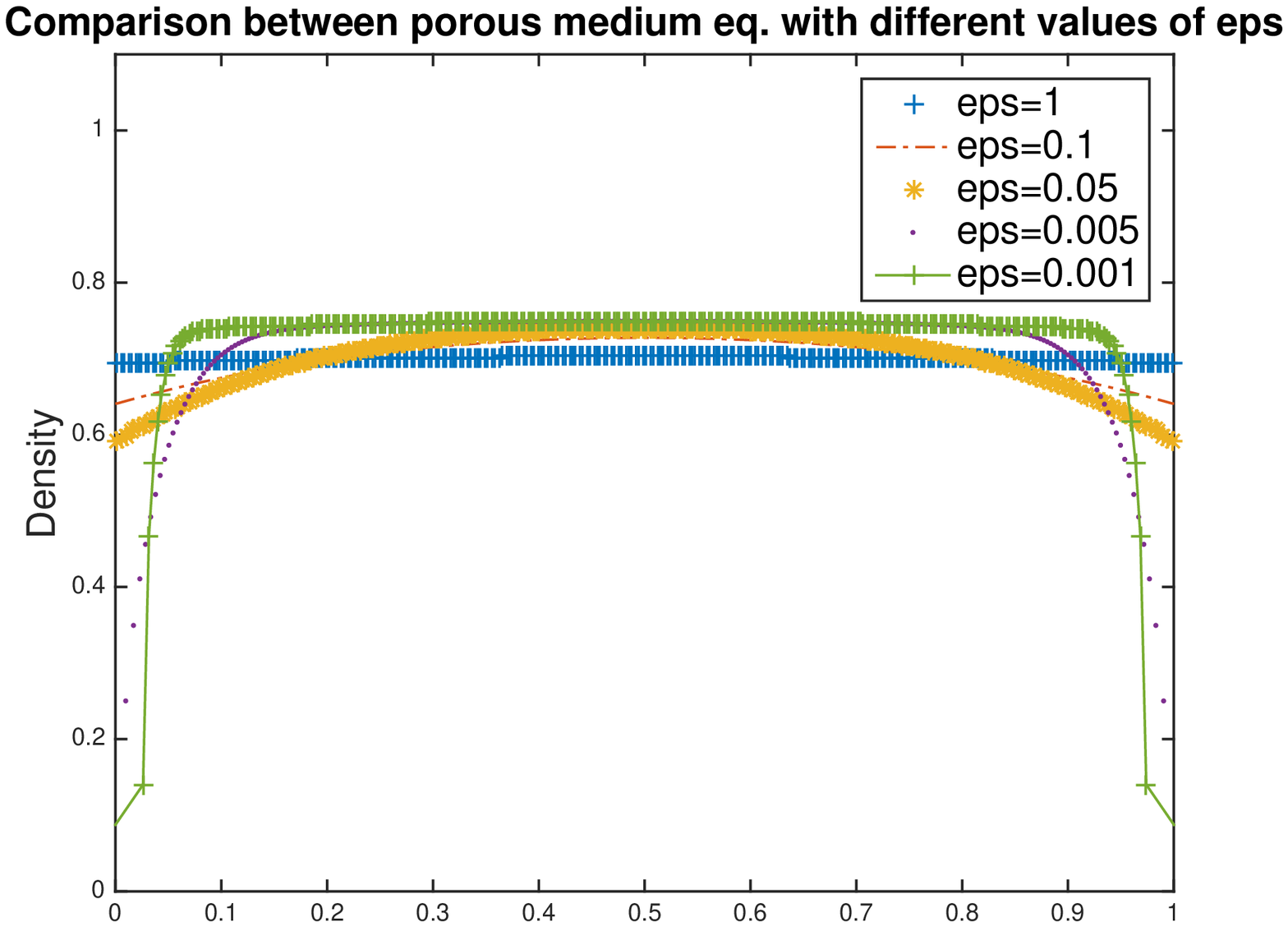}
\end{minipage}
\end{center}
\caption{Final configurations to aggregation-diffusion equations with $\phi_{PM}$ given by \eqref{qd} for different values of the diffusion coefficient $\epsilon$.}
\label{fig:confronto}
\end{figure}

Even if the diffusion coefficients are very small, and hence the aggregation effect is leading, the vacuum regions are forbidden. This is evident in \figurename~\ref{fig:novacuum} where we plot the stable density related to the case $\epsilon=0.001$.
\begin{figure}[!ht]
\begin{center}
\begin{minipage}[c]{.48\textwidth}
\includegraphics[width=1.1\textwidth]{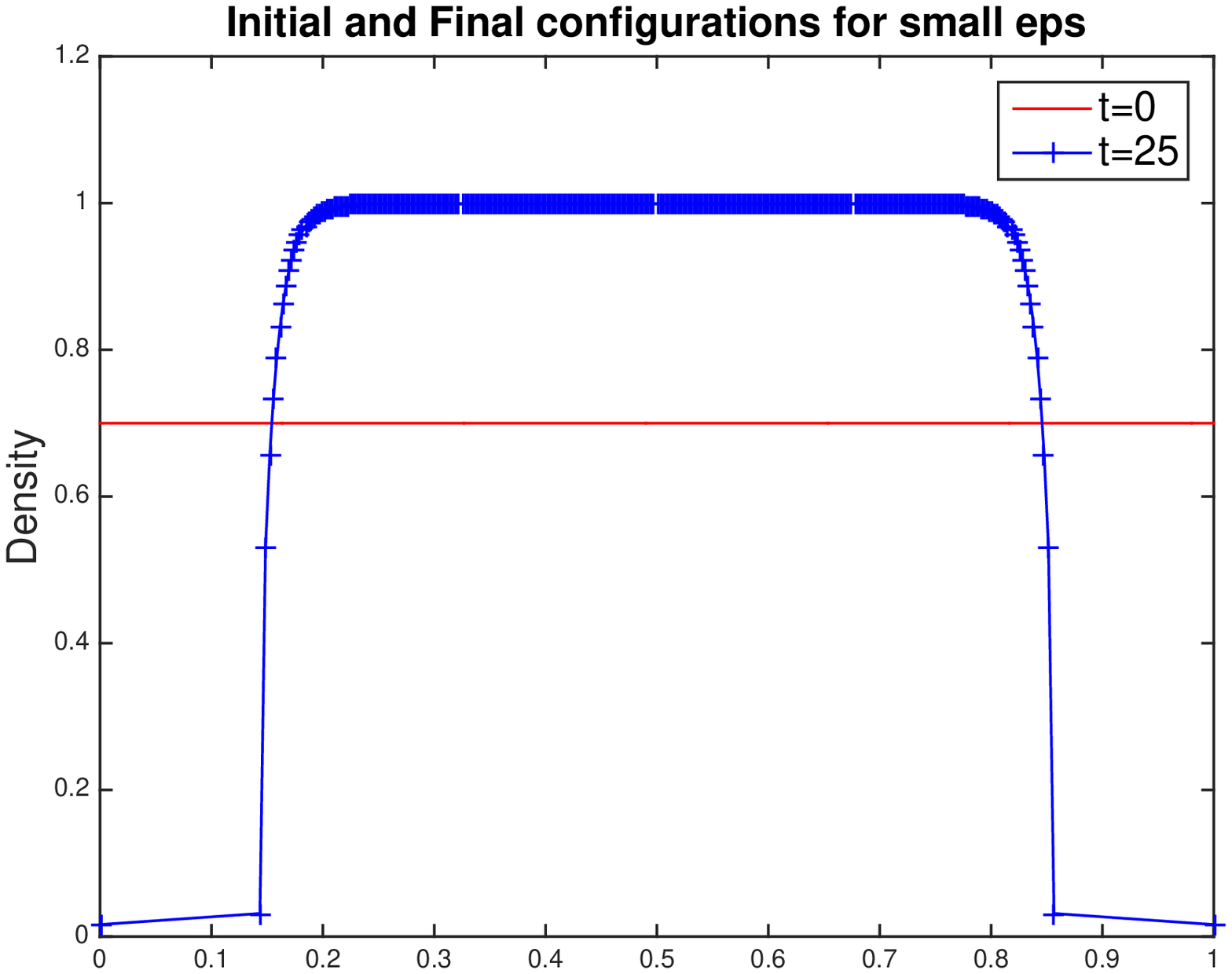}
\end{minipage}
\begin{minipage}[c]{.48\textwidth}
\includegraphics[width=1.1\textwidth]{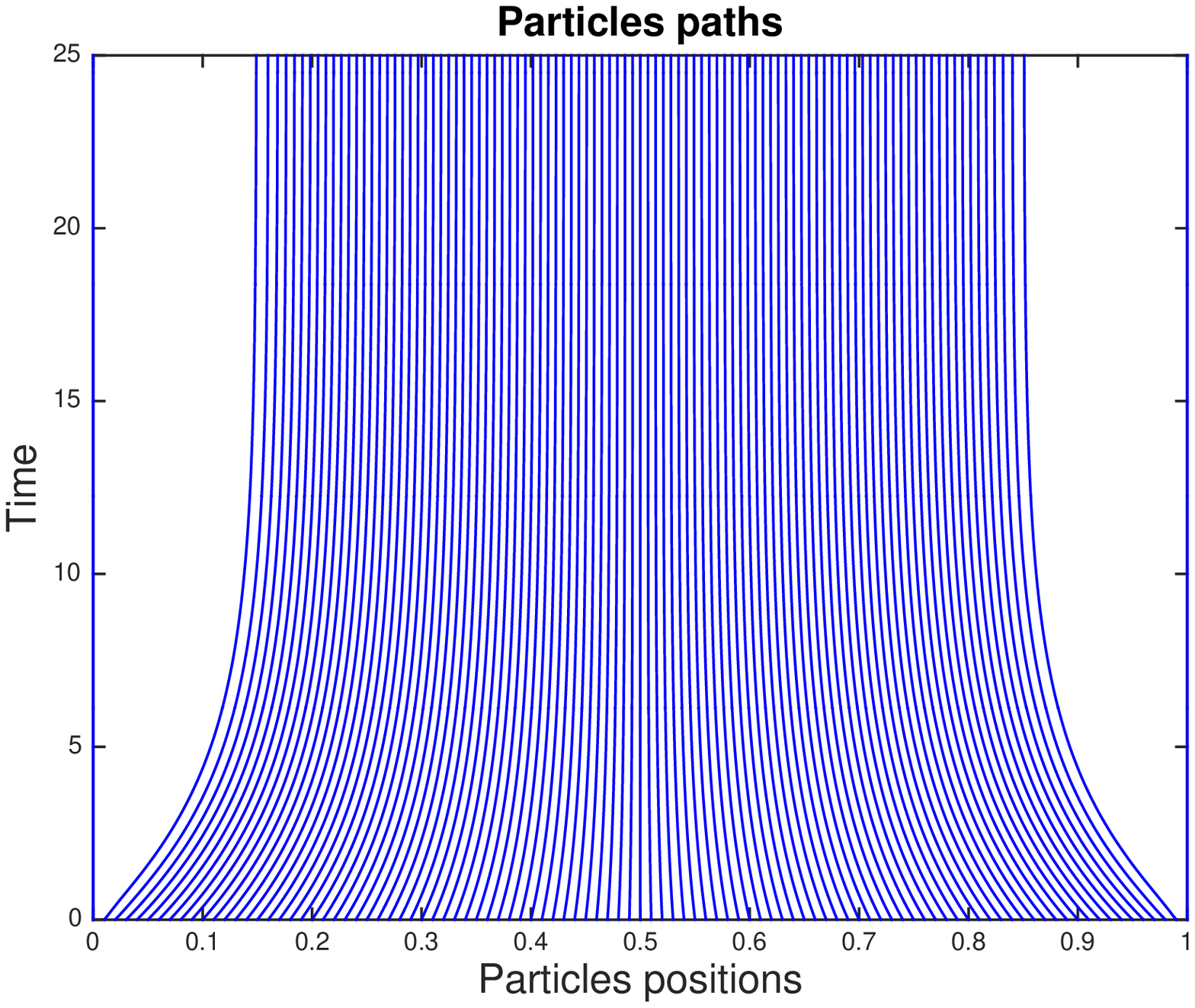}
\end{minipage}
\end{center}
\caption{Top. For $\epsilon=0.001$ in \eqref{qd} we are in the \emph{aggregation-dominated} regime. The degeneracy of $v$ stops the aggregation when the value $\rho=1$ is reached.}
\label{fig:novacuum}
\end{figure}

In \figurename~\ref{fig:confronto_2} we test the same initial condition with a diffusion function that presents a two-point degeneracy. We set
\begin{equation}\label{veldif}
\phi_{TP}(\rho)=\eps\int_0^\rho \zeta(1-\zeta)\zeta^{m-2}d\zeta=\frac{\eps}{m}\rho^m-
\frac{\eps}{m+1}\rho^{m+1},
\end{equation}
that is exactly a diffusion of porous medium type with nonlinear mobility, see \eqref{eq:diffusione}. The quadratic case plotted in \figurename~\ref{fig:confronto_2} shows the weaker effect of this diffusion functions. Indeed, if one takes as a reference the case $\epsilon=1$, the plot in \figurename~\ref{fig:confronto_2} is more concave than the respective one in \figurename~\ref{fig:confronto}.
\begin{figure}[!ht]
\begin{center}
\begin{minipage}[c]{.7\textwidth}
\includegraphics[width=1\textwidth]{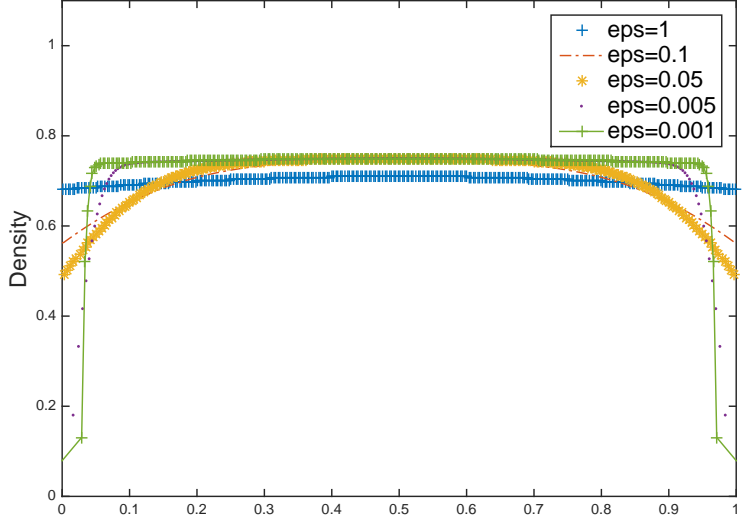}
\end{minipage}
\end{center}
\caption{Final configurations to aggregation-diffusion equations with $\phi_{TP}$ given by \eqref{veldif} for different values of the diffusion coefficient $\epsilon$.}
\label{fig:confronto_2}
\end{figure}

Let us present now some simulations in the \emph{strongly degenerate} diffusion regime. We consider 
\begin{equation}\label{strongdif}
\phi_{SD}(\rho)=\begin{cases}
 \frac{\epsilon}{2}\rho^2 &\quad  \rho\in\left[0,\frac25\right),\\
 \frac{2\epsilon}{25} &\quad  \rho\in\left[\frac25,\frac35\right),\\
 \frac{2\epsilon}{25}+ \frac{\epsilon}{2}(\rho-\frac35)^2 &\quad  \rho\geq \frac35.
\end{cases}
\end{equation}
Since we are in a smooth setting, in the time intervals in which $\phi_{SD}'\equiv 0$, the evolution is driven only by the aggregation term. This may result in formation of discontinuities in the density profile, similarly to the flux-saturated degenerate parabolic equations, see \cite{CKR}. In the left column of \figurename~\ref{fig:strong} we plot the evolution of an initial datum that is constantly equal to a value in the range of degeneracy of $\phi_{SD}'$. In the right column of \figurename~\ref{fig:strong}, instead, we consider a two steps initial datum with only one of the two values is critical for $\phi_{SD}'$.

\begin{figure}[!ht]
\begin{center}
\begin{minipage}[c]{.48\textwidth}
\includegraphics[width=1.1\textwidth]{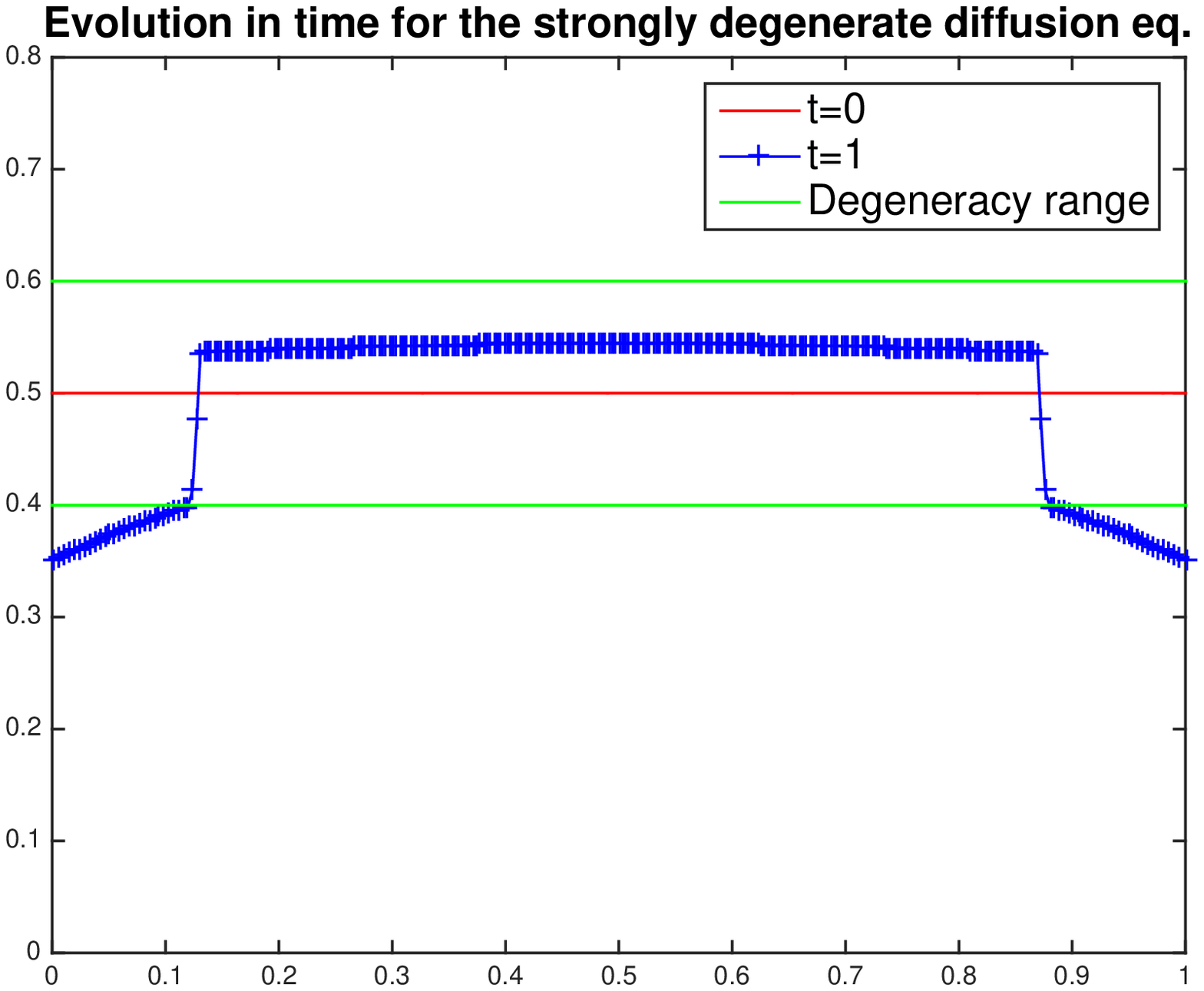}
\end{minipage}
\begin{minipage}[c]{.48\textwidth}
\includegraphics[width=1.1\textwidth]{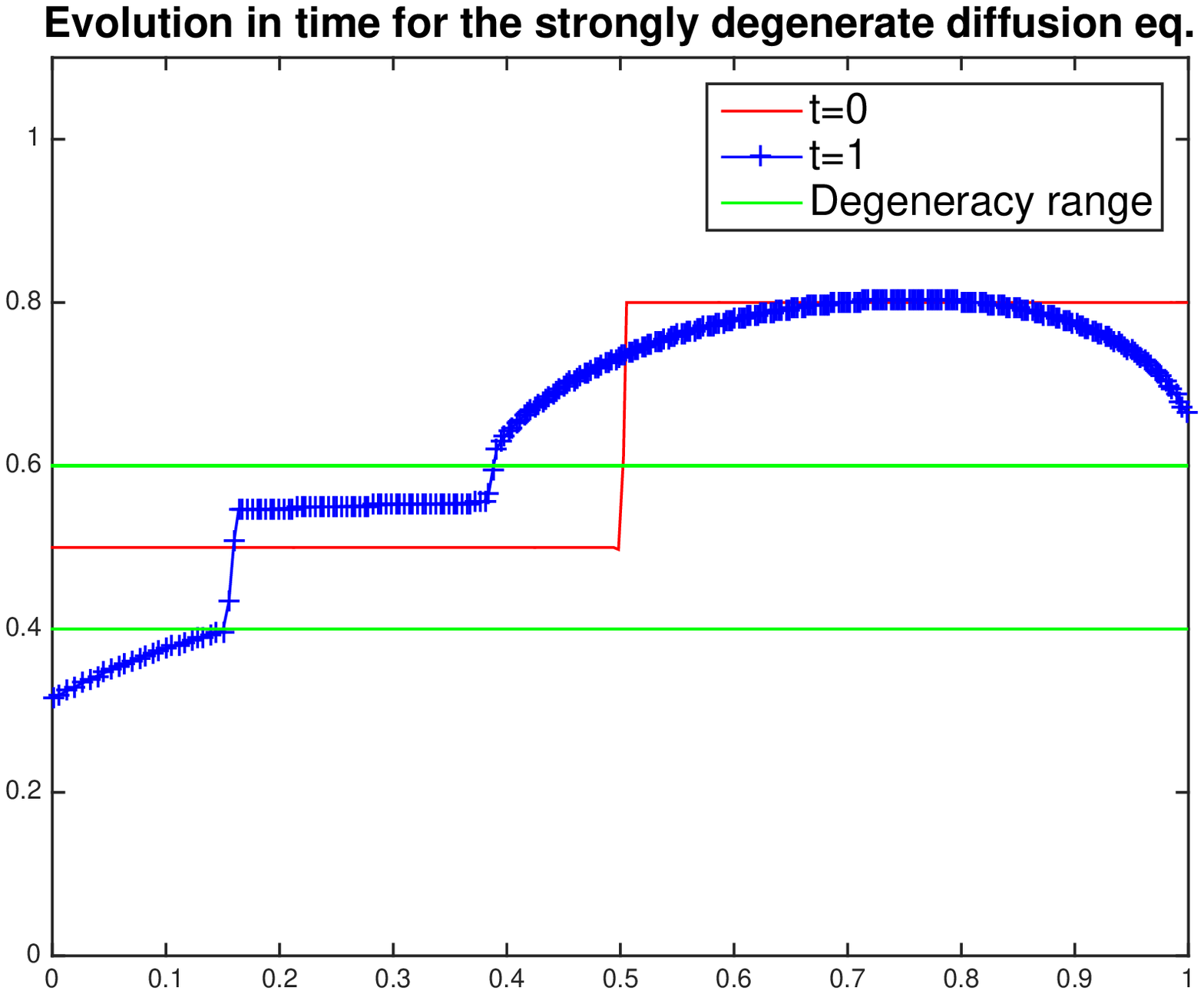}
\end{minipage}
\begin{minipage}[c]{.48\textwidth}
\includegraphics[width=1.1\textwidth]{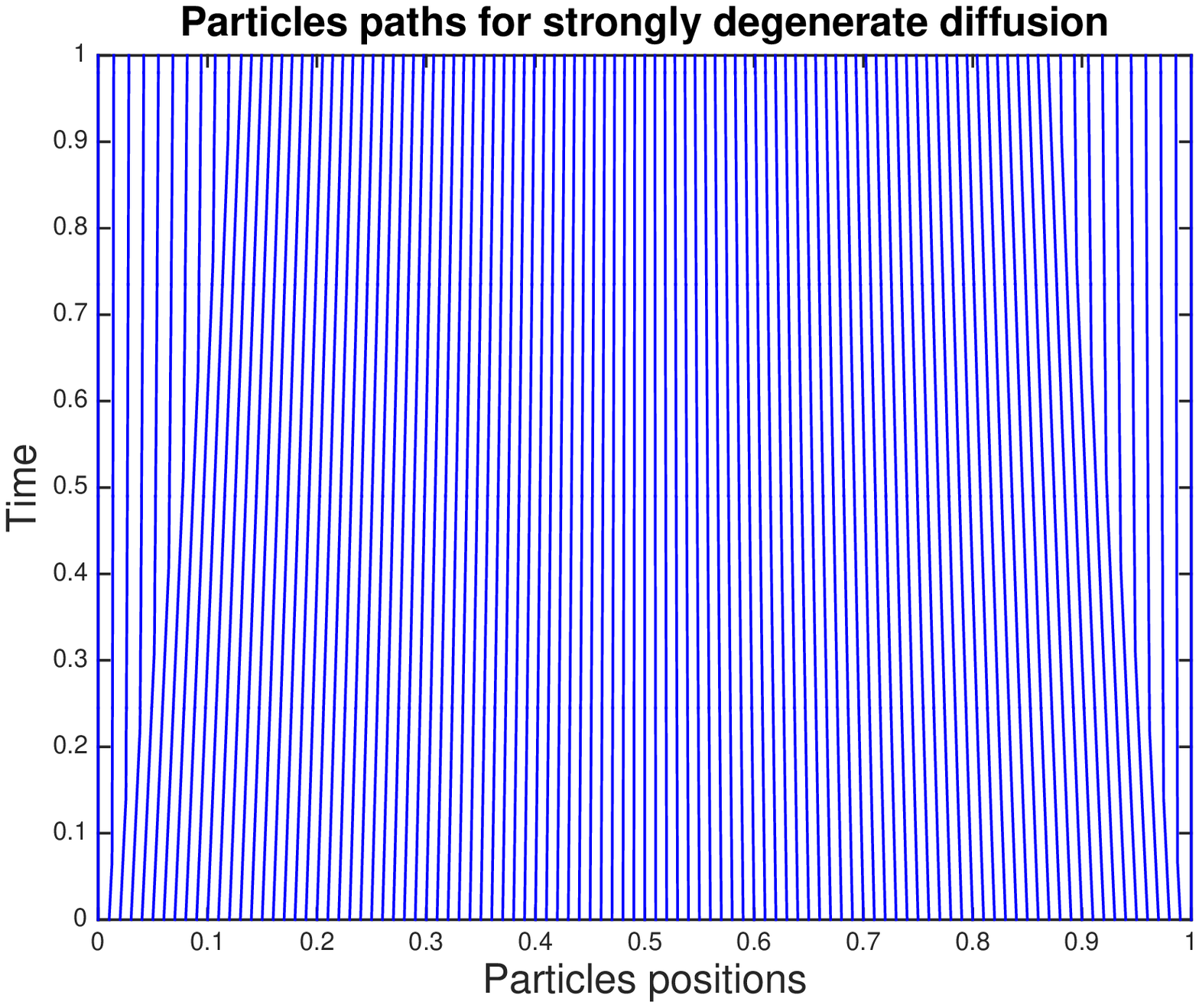}
\end{minipage}
\begin{minipage}[c]{.48\textwidth}
\includegraphics[width=1.1\textwidth]{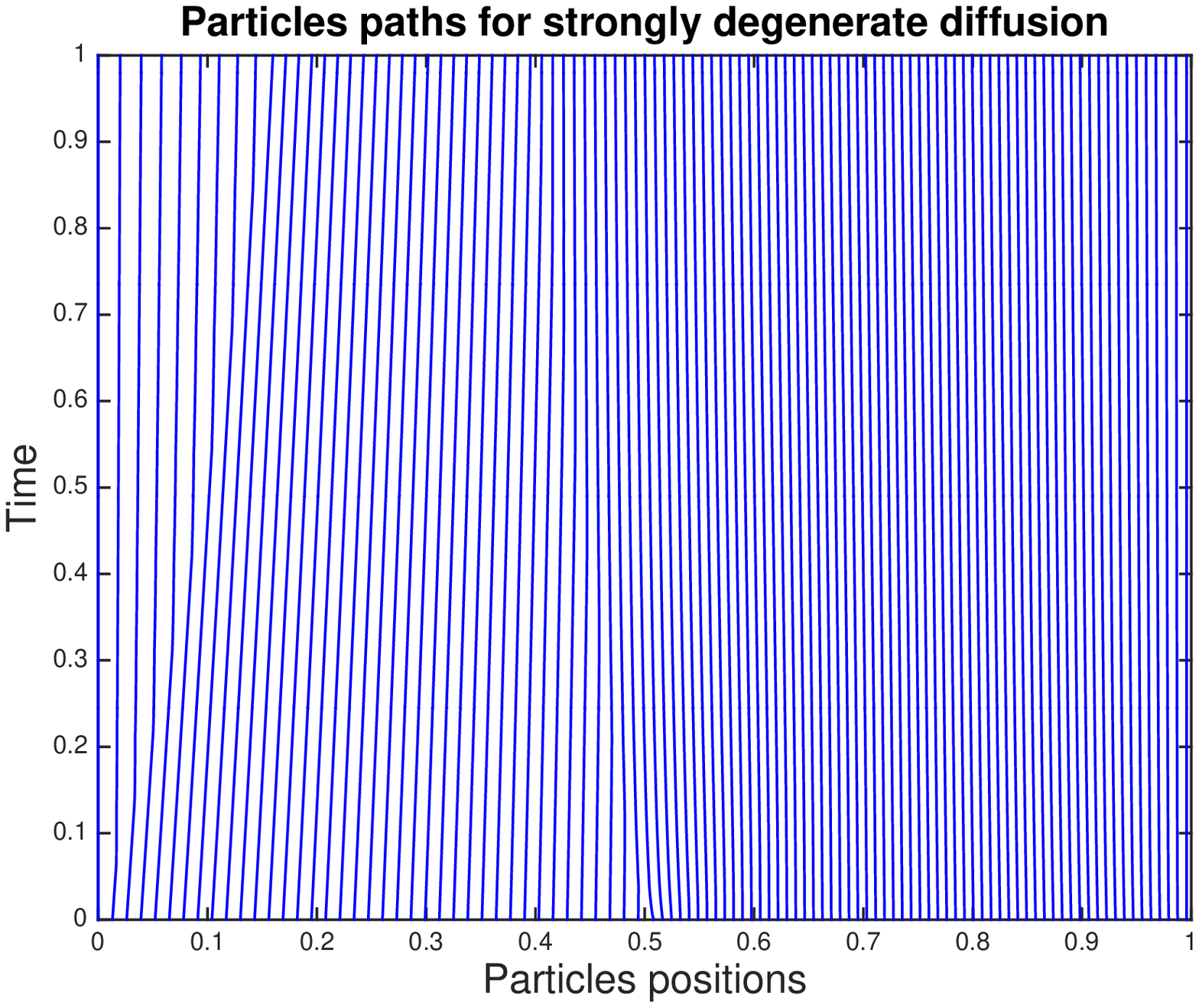}
\end{minipage}
\end{center}
\caption{The horizontal (green) lines denotes the range of degeneracy of $\phi_{SD}'$.\\
\emph{Left}. Initial and final configurations (top) and particles trajectories (bottom) for constant initial value with strongly degenerate diffusion. \\
\emph{Right}. Initial and final configurations and particles trajectories for the two steps initial value with strongly degenerate diffusion.}
\label{fig:strong}
\end{figure}

In the last example, see \figurename~\ref{fig:confronto_3},  we compare the evolutions of the oscillating initial datum 
\begin{equation}\label{seno}
 \bar{\rho}(x)=\frac12\left(\cos(4\pi x)+1\right), \quad x\in\left[0,2\right],
\end{equation}
with respect to the same aggregation potential \eqref{agg} and the different diffusion functions $\phi_{PM}$, $\phi_{TP}$ and $\phi_{SD}$.

\begin{figure}[!ht]
\begin{center}
\begin{minipage}[c]{.48\textwidth}
\includegraphics[width=1.1\textwidth]{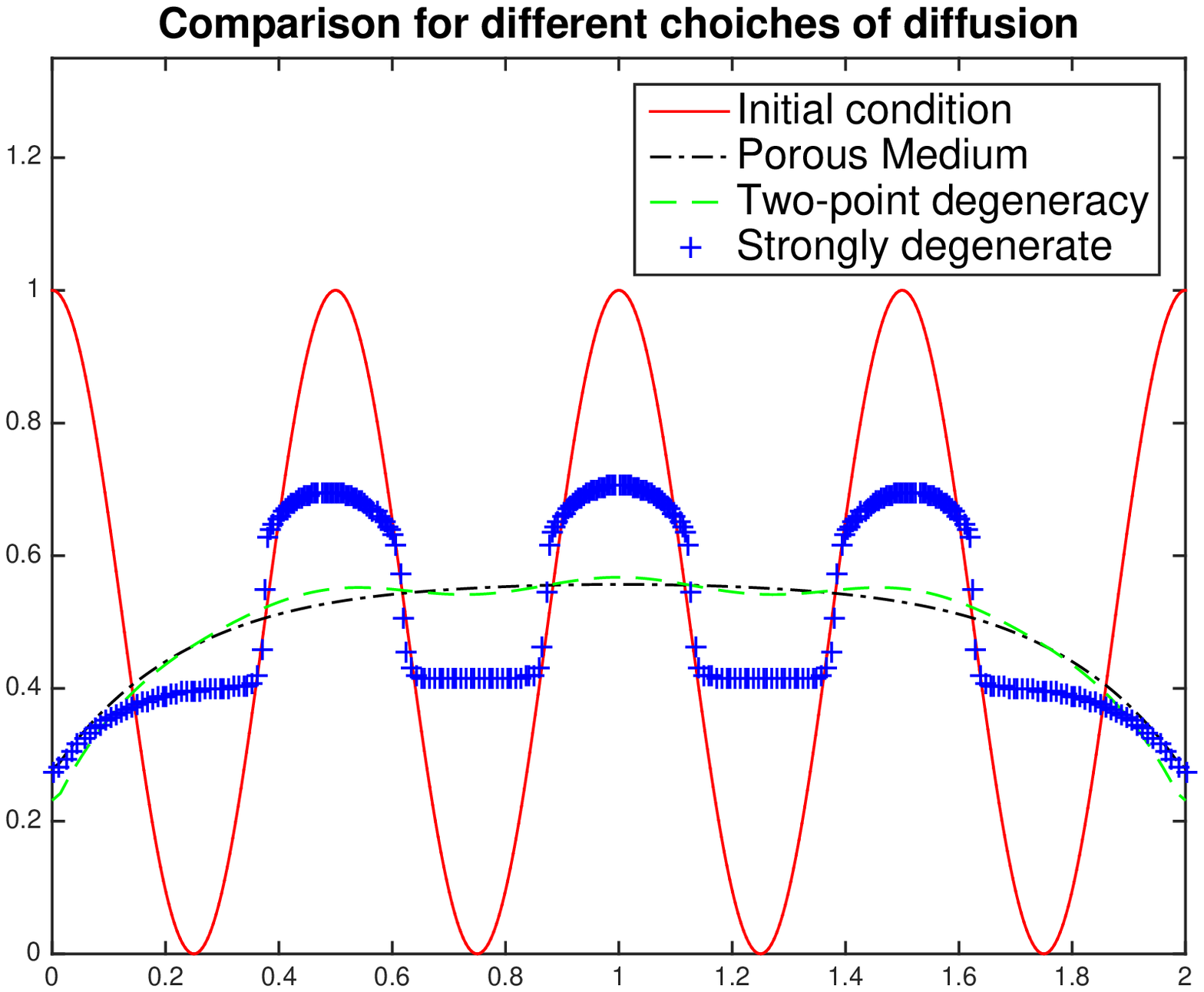}
\end{minipage}
\begin{minipage}[c]{.48\textwidth}
\includegraphics[width=1.1\textwidth]{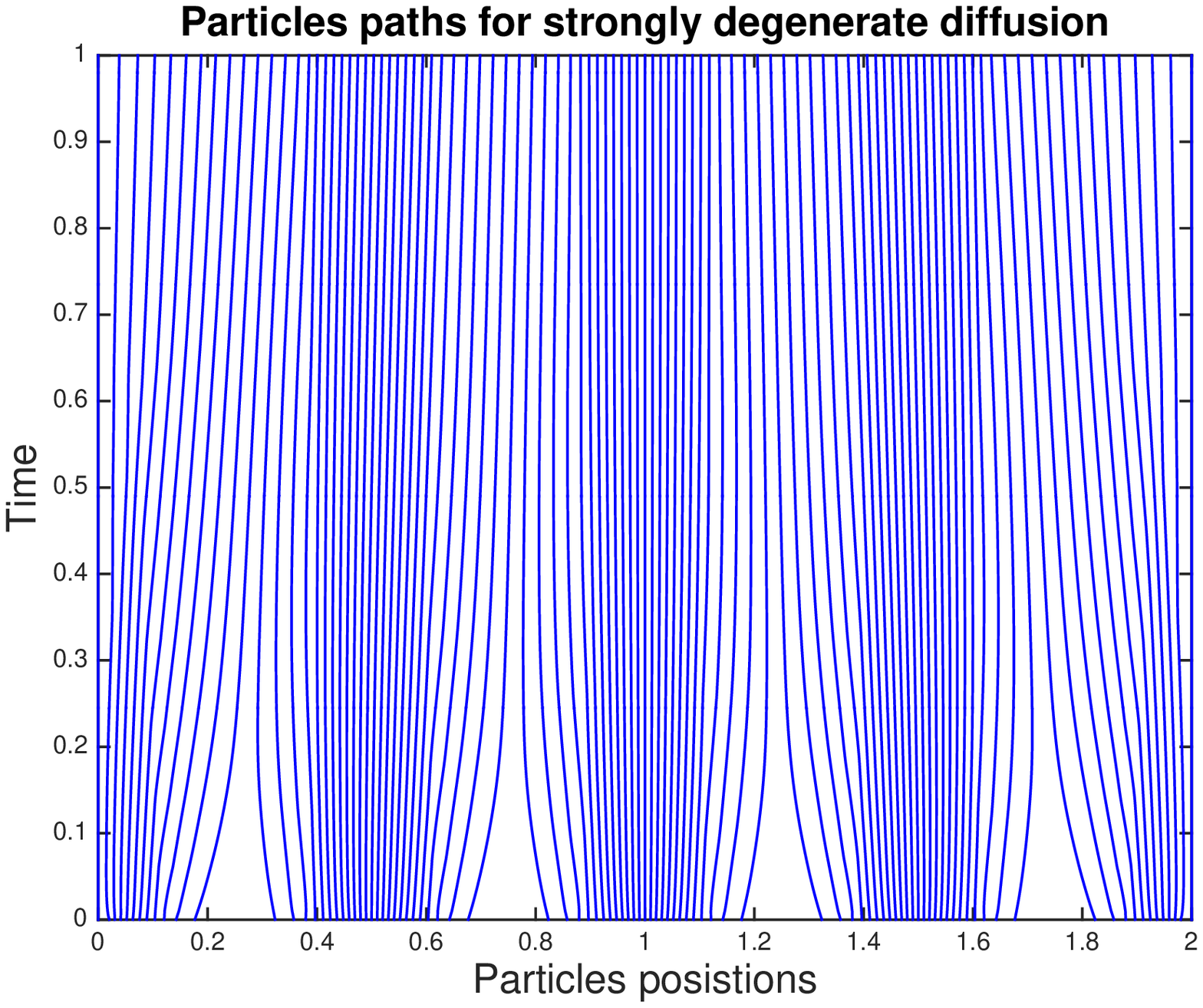}
\end{minipage}
\end{center}
\caption{Comparison between the final configurations for Gaussian potential \eqref{agg}, diffusion  $\phi_{PM}$, $\phi_{TP}$ and $\phi_{SD}$ and initial datum as in \eqref{seno}. The particle trajectories on the right correspond to the strongly degenerate case.}
\label{fig:confronto_3}
\end{figure}

\section{Conclusions and Perspectives}
The main purpose of this paper is to use a \emph{deterministic} (ODEs) particle approximation method to construct solutions to a fairly wide class of aggregation-diffusion equations with nonlinear mobility, which are largely used in several contexts in population biology (see the Introduction). We stress that the presence of nonlinear mobility is new compared to several results available in the literature (see for instance \cite{MatSol}). This issue, together with the presence of the nonlocal transport term, suggests us to adapt to our case the strategy developed in \cite{DFR} for scalar conservation laws. As a main result we prove that a suitable piecewise  constant density reconstruction converges strongly to weak solutions of the aggregation-diffusion equation in the sense of Definition \ref{weaksolutiondfn}.
We highlight that our approach is alternative to other possible approaches, such as the kinetic or probabilistic methods mentioned in the Introduction. In particular, the diffusion term is described by means of the \emph{deterministic} discrete osmotic velocity as in the seminal paper by Russo in the 90s. This type of description is able to catch the degenerate diffusion case, that can be useful in the modeling of biological phenomena. In this sense we rely on the approach by \cite{GT1}.  An interesting aspect of this deterministic approach is the capability of being designed as a powerful numerical scheme: by solving the system of ODEs \eqref{ODES} and reconstructing the density as in \eqref{rhoN}, we are able to catch interesting phenomena, such as the formation of discontinuities. This feature is of great use in the class of equations we consider here, in which the degeneracy of the diffusion term in some nontrivial intervals allows for the formation of shocks as we show in our simulations.

We plan to extend our approach to systems with many species, see \cite{fagioli} as an example in predator-prey interactions, and to the multidimensional case in the spirit of \cite{CHPW}.

\section*{Acknowledgements}
The authors would like to thank Marco Di Francesco for pointing out the problem and for the several fruitful discussions and comments. The first author wishes to thank Giovanni Russo for the helpful suggestions at the very early stage of this paper. 
The authors acknowledge support from the EU-funded Erasmus Mundus programme `MathMods - Mathematical models in engineering: theory, methods, and applications' at the University of L'Aquila, from the Italian GNAMPA mini-project `Analisi di modelli matematici della fisica,
della biologia e delle scienze sociali', and from the local fund of the University
of L’Aquila `DP-LAND (Deterministic Particles for Local And Nonlocal Dynamics).


\end{document}